\newtheorem{theorem}{Theorem}
\newtheorem{proposition}{Proposition}
\newtheorem{corollary}{Corollary}
\journal{Journal of Computational Physics}
\begin{document}

\begin{frontmatter}

%% Title, authors and addresses

\title{A Robust Hierarchical Solver for Ill-conditioned Systems with Applications to Ice Sheet Modeling}

%% use the tnoteref command within \title for footnotes;
%% use the tnotetext command for the associated footnote;
%% use the fnref command within \author or \address for footnotes;
%% use the fntext command for the associated footnote;
%% use the corref command within \author for corresponding author footnotes;
%% use the cortext command for the associated footnote;
%% use the ead command for the email address,
%% and the form \ead[url] for the home page:
%%
%% \title{Title\tnoteref{label1}}
%% \tnotetext[label1]{}
%% \author{Name\corref{cor1}\fnref{label2}}
%% \ead{email address}
%% \ead[url]{home page}
%% \fntext[label2]{}
%% \cortext[cor1]{}
%% \address{Address\fnref{label3}}
%% \fntext[label3]{}

%% use optional labels to link authors explicitly to addresses:
%% \author[label1,label2]{<author name>}
%% \address[label1]{<address>}
%% \address[label2]{<address>}

\author[label1]{Chao\ Chen}
\ead{chenchao.nk@gmail.edu}

\author[label1]{Leopold\ Cambier}
\ead{lcambier@stanford.edu}

\author[label2]{Erik~G. Boman}
\ead{egboman@sandia.gov}

\author[label2]{Sivasankaran\ Rajamanickam}
\ead{srajama@sandia.gov}

\author[label2]{Raymond~S. Tuminaro}
\ead{rstumin@sandia.gov}

\author[label1]{Eric\ Darve}
\ead{darve@stanford.edu}

\address[label1]{Stanford University}
\address[label2]{Sandia National Laboratories}

\begin{abstract}
A hierarchical solver is proposed for solving sparse ill-conditioned linear systems in parallel. The solver is based on a modification of the LoRaSp method, but employs a deferred-compression technique, which provably reduces the approximation error  and significantly improves efficiency. Moreover, the deferred-compression technique introduces minimal overhead and does not affect  parallelism. As a result, the new solver achieves linear computational complexity under mild assumptions and excellent parallel scalability.
To demonstrate the performance of the new solver, we focus on applying it to solve sparse linear systems arising from ice sheet modeling. The strong anisotropic phenomena associated with the thin structure of ice sheets creates serious challenges for existing solvers.
To address the anisotropy, we additionally developed a customized partitioning scheme for the solver, which captures the strong-coupling direction accurately. In general, the partitioning can be computed algebraically with existing software packages, and thus the new solver is generalizable for solving other sparse linear systems. Our results show that ice sheet problems of about 300 million degrees of freedom have been solved in just a few minutes using a thousand processors.
\end{abstract}
 
\begin{keyword}
Hierarchical matrix \sep Sparse matrix \sep Ice sheet modeling \sep Parallel computing
%% keywords here, in the form: keyword \sep keyword

%% MSC codes here, in the form: \MSC code \sep code
%% or \MSC[2008] code \sep code (2000 is the default)

\end{keyword}

\end{frontmatter}

%%
%% Start line numbering here if you want
%%
%\linenumbers

%% main text
\section{Introduction} \label{CC:sec:intro}

This paper considers the problem of solving large sparse linear systems, which is a fundamental building block but also often a computational bottleneck in many science and engineering applications. In particular, we target linear systems that result from the numerical discretization of elliptic partial differentials equations (PDE) including Laplace, Stokes, Helmholtz equations (in low and middle frequency regime), etc., using local schemes such as finite differences or finite elements. One challenge arises when the condition number of the problem is large, and existing solvers become inefficient. Existing solvers fall into three classes. The first class is sparse direct solvers~\cite{CC:davis2016survey}, which leverage efficient ordering schemes to perform Gaussian elimination. However, they generally require $\mathcal{O}(N^2)$ computation and $\mathcal{O}(N^{4/3})$ storage for solving a three-dimensional problem of size $N$. These large costs seriously limit the application of sparse direct solvers to truly large-scale problems. The second class is iterative solvers such as the conjugate gradient method and the multigrid method. These methods may need only $\mathcal{O}(N)$ work and storage per iteration. However, the number of iterations required to achieve convergence can be quite large when solving ill-conditioned linear systems. Preconditioning is essential to improve the conditioning and convergence.
 
The third class of methods, which is the focus here, is hierarchical solvers. These methods compute an approximate factorization of a discretized elliptic PDE by taking advantage of the fact that the discretization matrix (and its inverse) has certainty hierarchical low-rank structures, including $\mathcal{H}$-~\cite{CC:hackbusch1999sparse, CC:hackbusch2000sparse}, $\mathcal{H}^2$-~\cite{CC:hackbusch2002data, CC:hackbusch2015mathcal} matrices and hierarchically semiseparable (HSS)~\cite{CC:xia2010fast, CC:chandrasekaran2006fast} matrices, among others~\cite{CC:amestoy2015improving, CC:aminfar2016fast}. By exploiting this data sparsity of the underlying physical problem, hierarchical solvers have been shown to achieve linear or quasi-linear complexity. However, their efficiency deteriorates when highly ill-conditioned problems are encountered because the rank/costs of approximations must increase dramatically in order to maintain the same accuracy in the final solution. Note that hierarchical solvers can be used either as direct solver (high accuracy) or as a preconditioner (low accuracy) for iterative methods. Our focus is on the latter.

%many existing hierarchical solvers still cannot attain quasilinear complexity for solving three-dimensional problems and others may be either too complicated to be implemented efficiently or may be restricted to only structured problems.

In this paper, we introduce the deferred-compression technique to improve the efficiency of hierarchical solvers for solving sparse ill-conditioned linear systems and demonstrate this improvement by implementing it in a particular hierarchical solver named LoRaSp~\cite{CC:pouransari2017fast}. In hierarchical solvers such as LoRaSp, off-diagonal matrix blocks are compressed with low-rank approximations if they satisfy the strong admissibility condition~\cite{CC:hackbusch2002data}. In our new solver, these compressible matrix blocks are first scaled by Cholesky factors of the corresponding diagonal blocks before low-rank approximations are applied. This extra scaling step provably reduces errors in the subsequent step of forming the Schur complement.
%alleviating the problem of losing accuracy in solving ill-conditioned linear systems. 
In addition, it increases the likelihood that the Schur complement
remains symmetric positive definite with crude low-rank approximation when the original input matrix is SPD. For many practical applications, using the deferred-compression technique to preserve the SPD property of the underlying physical problem is crucial. % to arrive at an efficient solver.

Previous deferred-compression work~\cite{CC:xia2010robust,CC:xia2017effective,CC:xing2018preserving} focused on HSS matrices, which is a type of weakly admissible (as opposed to strongly admissible) hierarchical matrices. These approaches are not directly applicable to strongly admissible hierarchical matrices (e.g., $\mathcal{H}^2$ matrices) such as the LoRaSp solver. Furthermore, prior deferred-compression efforts concentrated on solving dense linear systems, where incorporating the deferred-compression step leads to an extra $\mathcal{O}(N^2)$ amount of computation, and no corresponding parallel solver was developed. Compared to the previously published papers, our deferred-compression technique is novel in three ways:
\begin{enumerate}
    \item we target hierarchical solvers specialized for strongly admissible hierarchical matrices, and develop an associated general error analysis; the previous analysis can be recovered as a special case of our new analysis.
    \item we propose a new solver for sparse linear systems for which it is proved that the computational complexity is $\mathcal{O}(N)$ under some mild assumptions. This nearly optimal complexity implies that we can solve large problems with minimum asymptotic computational cost (up to some constants).
    \item  we show that incorporating the deferred-compression scheme into the LoRaSp solver does not change the data and task dependencies in the parallel LoRaSp solver~\cite{CC:chen2016parallel}. Therefore, we can take advantage of the existing parallel algorithm to solve large-scale problems efficiently (on distributed-memory machines).
\end{enumerate}

In order to demonstrate the performance of our new solver, this paper addresses the challenges of solving linear systems from a real-world problem---ice sheet modeling where the solution of discretized linear systems remains the computational bottleneck. Ice sheet modeling is an essential component needed to estimate future sea-level rise due to climate modeling. As noted in~\cite{CC:solomon2007climate, CC:stocker2014climate} from the Intergovernmental Panel on Climate Change (IPCC), modern ice sheet models must continue to introduce advanced features such as adaptive mesh refinement at sub-kilometer resolutions, optimization, data assimilation and uncertainty quantification for the treatment of numerous unknown model inputs. These advances will likely introduce further computational burdens requiring improvements in the linear solver, which must be repeatedly invoked over the course of the simulation.  Given that current ice sheet simulations already consume resources on thousands of processing units on modern supercomputers and can involve up to billions of unknown variables, there is a pressing need for efficient linear solvers to reduce simulation costs and prepare for potentially larger more sophisticated simulations in the future.

However, many existing solvers turn out to deliver rather disappointing performance for solving problems from ice sheet modeling. The most prominent challenge comes from the anisotropic nature of ice sheet models, where the thin vertical scale of the domain is tiny relative to the horizontal scale. This extraordinary contrast is also reflected by the dramatically different magnitudes of entries in the discretization matrix, where large values correspond to strong vertical coupling and tiny ones to weak horizontal coupling. This weak coupling gives rise to oscillatory eigenvectors associated with small eigenvalues and a poorly-conditioned linear system. This can be seen from a simplified model $\epsilon u_{xx} + u_{yy}$, where $\epsilon \ll 1$, where the standard five point finite difference discretization on a $n \times n$ regular grid produces a matrix with many small eigenvalues \[4(n+1)^2 [\epsilon \sin^2(\pi i/(2n+2)) + \sin^2(\pi j/(2n+2))]\] for \textit{all} values of $i$ and small values of $j$. Further, the Neumann boundary condition imposed on the top surface and some bottom parts of the domain gives rise to problematic linear systems with nearly singular matrix blocks. Physically, the bottom Neumann boundary condition models large ice shelves, which are vast areas of floating ice connected to land-based ice sheets and are common to Antarctica. The resulting Green's function decays much slower along the vertical direction than that for non-sliding ice at a frozen ice interface~\cite{CC:tuminaro2016matrix}, again contributing to the poor performance of many existing solvers.

%This near-singularity needs to be taken care of in hierarchical solvers to avoid instability, and the corresponding near-null space needs to be preserved at every level in multi-level iterative methods, such as the multigrid method, to obtain fast convergence.

%As shown in~\cite{CC:tezaur2015scalability}, while the carefully designed ILU preconditioner exhibits excellent parallel scalability for a realistic, high-resolution Greenland ice sheet problem,

The two solvers (preconditioners) commonly used in ice sheet modeling are the incomplete LU factorization (ILU) and the algebraic multigrid method (AMG). Although the customized ILU with a specific ordering scheme performs reasonably well for the Greenland ice sheet problem, its performance deteriorates significantly for the Antarctic ice sheet problem. The reason is that ice sheets on the Antarctic problem contain a substantial fraction of floating ice shelves, modeled by imposing Neumann boundary conditions, which leads to aforementioned ill-conditioned linear systems. Another possible approach to solve the ice sheet linear systems is some form of algebraic multigrid (AMG). However, standard AMG methods (e.g., the smoothed aggregation AMG solver~\cite{CC:vanvek1996algebraic}) do not generally converge on realistic ice sheet simulations. While some specialized AMG techniques have been successfully developed (e.g., a customized matrix-dependent AMG solver~\cite{CC:tuminaro2016matrix}) using tailored semi-coarsening schemes, these approaches required significant non-trivial multigrid adaptions to address ice sheet simulations. These types of adaptations are not generally provided with available AMG packages.

%propose using a hierarchical solver, named LoRaSp~\cite{CC:pouransari2017fast} (Low-Rank Sparse solver), which is based on mesh partitioning and low-rank compression. The LoRaSp method is a general linear solver/preconditioner for sparse matrices, which can be more robust than the ILU preconditioner for anisotropic and ill-conditioned problems, as demonstrated in~\cite{CC:pouransari2017fast}. Under mild assumptions, the factorization (setup) cost of LoRaSp is linear as the problem size increases. The parallel algorithm of LoRaSp is introduced in~\cite{CC:chen2016parallel}, which has a similar communication pattern to the parallel multigrid method, i.e., every processor exchanges only boundary data using local communication. Moreover, the parallel LoRaSp method involves higher arithmetic intensity (more flops per memory read), so it has the potential to be faster on modern computer architectures than alternative sparse solvers.

To solve the particular linear systems from ice sheet modeling efficiently, our new solver introduces one customization to efficiently address the ice sheet linear systems. Specifically, the typical meshes employed for ice sheet models are generated by first creating a two-dimensional unstructured horizontal mesh and then extruding this mesh into the vertical dimension to create a three-dimensional grid. This mesh structure is leveraged when building clusters for the hierarchical solver. 
%makes the best of a customized ``extruded partitioning'' of the extruded three-dimensional mesh. 
In particular, the (two-dimensional unstructured) non-extruded mesh is first partitioned with a general graph partitioner and then the horizontal partition results are extended along the third/extruded direction such that mesh vertices lying on the same vertical line belong to the same cluster. Since extruded meshes appear frequently in geophysical problems such as atmospheric and oceanic circulation, oil and gas modeling, etc., our new solver along with the ``extruded partitioning'' algorithm can be generally applied to other engineering simulations involving thin structures. 
%For the purpose of this study, we consider ice sheet models based on first-order Stokes equations, which are sufficiently accurate for simulating the flow over most parts of an ice sheet while being considerably less computationally expensive than simulating the full nonlinear Stokes equations. 
%
%
Compared with the ILU and the AMG methods used for ice sheet modeling, our new solver is robust in the sense that the iteration number stays nearly constant if it is used as a preconditioner for solving linear systems associated with ice sheet modeling, and our new solver is general-purpose in that it can be applied as a ``black-box'' method with a general partitioning scheme available in several existing software packages, such as METIS/ParMETIS~\cite{CC:karypis1998fast}, Scotch~\cite{CC:chevalier2008pt} and Zoltan~\cite{CC:boman2012zoltan}, though a special partitioner can also be easily incorporated. Moreover, it is challenging to parallelize the ILU and the AMG methods on modern many-core architectures such as the GPU. Our new solver, similar to other hierarchical solvers, is mainly based on dense linear algebra subroutines and thus can potentially be accelerated using many-core processors.

To summarize, the paper presents a parallel hierarchical solver for sparse ill-conditioned linear systems using the deferred-compression technique, and in particular, our work makes the following three major contributions:
\begin{enumerate}
\item Error analysis of the deferred-compression scheme for hierarchical solvers based on strongly admissible hierarchical matrices (e.g., $\mathcal{H}^2$-matrices).
\item A parallel/distributed-memory hierarchical solver for sparse ill-conditioned linear systems.
\item Application and analysis of the preconditioner for an ice sheet modeling problem, including numerical comparisons with ILU.\footnote{A high-performance implementation in the Trilinos IFPACK package.}
\end{enumerate}

%We emphasize that the clustering step is orthogonal to the LoRaSp algorithm, and the algorithm is fairly robust to partitions computed by different algorithms~\cite{CC:chen2018hierarchical}. Therefore, our improved LoRaSp solver remains general, as the original LoRaSp solver was demonstrated to be, and for a linear system where information of the underlying mesh/problem is unavailable, the clustering step can be done purely algebraically with existing software packages.

%we customize the partitioning step used in our new LoRaSp solver. As in the original LoRaSp solver, the unknowns of a linear system are first clustered and are then processed cluster by cluster using a ``compress-and-eliminate'' technique. To obtain the best performance of our solver, the clustering step is customized for the extruded mesh structure being used in ice sheet models. 

The remainder of this paper is organized as follows. Section~\ref{CC:sec:diagonal_scaling} introduces the deferred-compression technique and provides an error analysis. Following that is the algorithm of our new solver presented in Section~\ref{CC:sec:algorithm}. Next Section~\ref{CC:sec:stokes} briefly summarizes the first-order-accurate Stokes approximation model of ice sheets and introduces the ``extruded partitioning'' algorithm. Finally, in Section~\ref{CC:sec:results} numerical results are given demonstrating the performance and scalability of our new solver for ice sheet modeling and also general problems from the SuiteSparse Matrix Collection.\footnote{https://sparse.tamu.edu/}

\section{Deferred-compression Scheme} \label{CC:sec:diagonal_scaling}

This section presents the algorithm for deferred-compression and the corresponding error analysis, targeted at hierarchical solvers that are based on strongly admissible hierarchical matrices. These solvers employ low-rank approximations to compress off-diagonal matrix blocks that satisfy the strong-admissibility condition. A rigorous definition of the strong-admissibility condition can be found in~\cite{CC:hackbusch2002data}. From a high-level perspective, the strong-admissibility condition states that one block-row in a (appropriately partitioned) strongly admissible hierarchical matrix includes a diagonal block corresponding to ``self-interaction,'' a full-rank off-diagonal block corresponding to ``neighbor or near-field interaction,'' and a (numerically) low-rank off-diagonal block corresponding to ``well-separated or far-field interaction.''
Therefore, a strongly admissible hierarchical matrix $A$ can be partitioned as the following $3 \times 3$ block matrix
\[
A = 
\begin{pmatrix}
A_{ss} & A_{sn} & A_{sw} \\
A_{ns} & A_{nn} & A_{nw} \\
A_{ws} & A_{wn} & A_{ww}
\end{pmatrix}
\]
where ``s'' is a set of row/column indexes that we seek to eliminate via a Cholesky factorization. ``n'' stands for the set of indexes for which $A_{ns}$ and $A_{sn}$ are full rank, and ``w'' is used to denote the low-rank blocks $A_{ws}$ and $A_{sw}$. We further assume that $A$ is SPD in this paper, so $A_{ns} = A_{sn}^T$, $A_{ws} = A_{sw}^T$, and $A_{wn} = A_{nw}^T$.

%In the following, the deferred-compression scheme is illustrated using the LoRaSp algorithm and it extends to other hierarchical solvers that are based on the strong admissibility condition (with potential minor modifications).

%In the following, a $3 \times 3$ block matrix is used as an example, and the generalization is presented in the next subsection. Consider computing an approximate Cholesky-type factorization of an SPD matrix partitioned as below:

Below we first review the classical Cholesky factorization and introduce some notations; then we analyze the errors in forming (approximate) Schur complements when the $s$ block is eliminated with and without using the deferred-compression scheme. In order to measure error, we use the matrix-norm (a.k.a., 2-norm or operator norm) denoted by $\|\cdot\|$.

\paragraph{Cholesky factorization} To carry out one step of (block) Cholesky factorization on the $s$ block in $A$, we define the following three matrices 
\begin{align*}
{\cal S}_s = 
\begin{pmatrix}
G_s^{-1} & &  \\
& I & \\
& & I
\end{pmatrix}
\qquad
{\cal L}_s = 
\begin{pmatrix}
I & & \\
- A_{ns} G_s^{-T} & I & \\
- A_{ws} G_s^{-T} & & I
\end{pmatrix}
\qquad 
{\cal C}_s = {\cal L}_s {\cal S}_s
\end{align*}
where $A_{ss} = G_s G_s^T$ is the Cholesky factorization of $A_{ss}$. The (exact) Schur complement $S_{A}$ is found in the lower $2 \times 2$ block matrix of ${\cal C}_s A {\cal C}_s^T$ as
\begin{equation} \label{eqn:exact_sc}
S_A = 
\begin{pmatrix}
A_{nn} - A_{ns} A_{ss}^{-1} A_{sn} & A_{nw} - A_{ns} A_{ss}^{-1} A_{sw} \\
A_{wn} - A_{ws} A_{ss}^{-1} A_{sn} & A_{ww} - A_{ws} A_{ss}^{-1} A_{sw}
\end{pmatrix}
\end{equation}

To actually compute the Cholesky factorization of the whole matrix $A$, the Schur complement $S_A$ needs to be further factorized, which is skipped here since this is not relevant for our current discussion.

\paragraph{\textbf{Without} deferred-compression scheme} 
Suppose the low-rank matrix block $A_{sw}$ can be decomposed as 
\begin{equation}\label{eqn:offdiag}
A_{sw} = U V^T = 
\begin{pmatrix}
U_1 & U_2
\end{pmatrix}
\begin{pmatrix}
V_1^T \\
V_2^T 
\end{pmatrix}
= U_1 V_1^T + U_2 V_2^T, 
%= U_1 \Sigma_1 V_1^T + O(\epsilon),
\end{equation}
where $U$ is an orthogonal matrix and $\|V_2^T\|_2 = \epsilon$, a small prescribed tolerance. This kind of decomposition can be computed using, e.g., a rank-revealing QR factorization (RRQR). Dropping the $U_2 V_2^T$ term in $A$ leads to the compressed matrix $\tilde{A} = compress(A)$ as follows
\[
A \approx \tilde{A} = 
%A \approx 
\begin{pmatrix}
A_{ss} & A_{sn} & U_1 V_1^T \\
A_{ns} & A_{nn} & A_{nw} \\
V_1 U_1^T & A_{wn} & A_{ww}
\end{pmatrix},
\]
where the low-rank approximation can be exploited to compute an approximate factorization of $A$ at a lower cost. Apply one step of the Cholesky factorization on the $s$ block in $\tilde{A}$ with 
\[
\tilde{{\cal C}}_s = \tilde{{\cal L}}_s {\cal S}_s
\]
where
\[
\tilde{{\cal L}}_s =
\begin{pmatrix}
I & & \\
- A_{ns} G_s^{-T} & I & \\
- V_1 U_1^T G_s^{-T} & & I
\end{pmatrix}.
\]
As a result, $\tilde{{\cal C}}_s \tilde{A} \tilde{{\cal C}}_s^T$ contains $S_{\tilde{A}}$, an approximation of $S_A$ with error $\tilde{E}$:
\begin{align} \label{eqn:sc_old}
S_{\tilde{A}} & = 
%\tilde{S} & =
\begin{pmatrix}
A_{nn} - A_{ns} A_{ss}^{-1} A_{sn} & A_{nw} - A_{ns} A_{ss}^{-1} (U_1 V_1^T) \\
A_{wn} - (V_1 U_1^T) A_{ss}^{-1} A_{sn} & A_{ww} - (V_1 U_1^T) A_{ss}^{-1} (U_1 V_1^T)
\end{pmatrix}, %[10pt]
\end{align}
\begin{align} \label{eqn:err_old}
\tilde{E} = &
S_{\tilde{A}} - S_A = 
\begin{pmatrix}
0 & A_{ns} A_{ss}^{-1} (U_2 V_2^T) \\
(V_2 U_2^T) A_{ss}^{-1} A_{sn} & A_{ws} A_{ss}^{-1} A_{sw} - (V_1 U_1^T) A_{ss}^{-1} (U_1 V_1^T)
\end{pmatrix}.
\end{align}

\begin{proposition}
Assume Eq.(\ref{eqn:offdiag}) holds, the error $\tilde{E}$ between the two Schur complements, namely $S_{\tilde{A}}$ in $\tilde{{\cal C}}_s \tilde{A} \tilde{{\cal C}}_s^T$ and $S_{A}$ in ${\cal C}_s A {\cal C}_s^T$ takes the form in Eq.(\ref{eqn:err_old}). Moreover, the following error estimates hold \vspace{2mm}
\begin{enumerate}
\setlength\itemsep{0.6em}
\item $\|\tilde{E}_{ww}\| \le 2 \epsilon \|A_{sw}\| / \sigma_{\min}(A_{ss}) + O(\epsilon^2)$,
\item $\|\tilde{E}_{nw}\| = \|\tilde{E}_{wn}\| \le \epsilon \|A_{ns}\| / \sigma_{\min}(A_{ss})$, 
\item and $\|\tilde{E}\| \le \|\tilde{E}_{ww}\| + \|\tilde{E}_{nw}\| \le \epsilon (2 \|A_{sw}\| + \|A_{ns}\|) / \sigma_{\min}(A_{ss}) + O(\epsilon^2)$,
\end{enumerate}
\vspace{2mm}
where $\tilde{E}_{nw}, \tilde{E}_{ww}$ and $\tilde{E}_{ww}$ stand for the (1,2) block, (2,1) block and (2,2) block in $\tilde{E}$.
\end{proposition}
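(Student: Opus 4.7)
The plan is to prove the three estimates in order, working directly from the error expression in Eq.~(\ref{eqn:err_old}) together with the low-rank decomposition in Eq.~(\ref{eqn:offdiag}), the orthogonality of $U = [U_1\ U_2]$, and the hypothesis $\|V_2^T\| = \epsilon$. The main tools are submultiplicativity of the spectral norm, the identity $\|A_{ss}^{-1}\| = 1/\sigma_{\min}(A_{ss})$, and the inequality $\|U_i\| \le \|U\| = 1$ that follows from $U$ having orthonormal columns.

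For the off-diagonal block $\tilde{E}_{nw} = A_{ns} A_{ss}^{-1} (U_2 V_2^T)$, a single application of submultiplicativity gives $\|\tilde{E}_{nw}\| \le \|A_{ns}\|\,\|A_{ss}^{-1}\|\,\|U_2\|\,\|V_2^T\| \le \epsilon \|A_{ns}\| / \sigma_{\min}(A_{ss})$, and $\|\tilde{E}_{wn}\| = \|\tilde{E}_{nw}\|$ by transposition. This settles (2). For the diagonal block I would expand $A_{ws} A_{ss}^{-1} A_{sw}$ using $A_{sw} = U_1 V_1^T + U_2 V_2^T$ and its transpose, producing four terms; subtracting the retained term $(V_1 U_1^T) A_{ss}^{-1} (U_1 V_1^T)$ leaves two cross terms (each linear in $V_2$) together with one quadratic term $V_2 U_2^T A_{ss}^{-1} U_2 V_2^T$ that is absorbed into $O(\epsilon^2)$. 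Each cross term is bounded by $\|V_1\|\,\|A_{ss}^{-1}\|\,\|V_2\| \le \epsilon \|A_{sw}\|/\sigma_{\min}(A_{ss})$, where the key inequality $\|V_1\| \le \|A_{sw}\|$ follows because orthonormality of the columns of $U$ yields $\|A_{sw}\| = \|V\|$, and $V_1$ being a column submatrix of $V$ gives $\|V_1\| \le \|V\|$. Summing the two cross-term bounds produces the factor of $2$ in (1).

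For the global estimate (3), I would write $\tilde{E}$ as a sum of a purely block-off-diagonal matrix with blocks $\tilde{E}_{nw}$ and $\tilde{E}_{wn} = \tilde{E}_{nw}^T$ plus a block whose only nonzero entry is $\tilde{E}_{ww}$. The first piece is symmetric with nonzero eigenvalues equal to $\pm$ the singular values of $\tilde{E}_{nw}$, so its spectral norm equals $\|\tilde{E}_{nw}\|$; the second has spectral norm $\|\tilde{E}_{ww}\|$. The triangle inequality then yields $\|\tilde{E}\| \le \|\tilde{E}_{nw}\| + \|\tilde{E}_{ww}\|$, and substituting (1) and (2) closes the argument.

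The main obstacle is the diagonal estimate (1): one must carefully separate the two linear-in-$\epsilon$ cross terms from the quadratic remainder, and must invoke the sharp bound $\|V_1\| \le \|A_{sw}\|$ (a coarser estimate such as $\|V_1\| \le \|V_1\| + \|V_2\|$ treated generically would not deliver the stated constant). The off-diagonal argument in (2) and the block-norm splitting in (3) are essentially routine once this structural observation is in place.
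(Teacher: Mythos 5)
Your proof is correct and takes essentially the same route as the paper's: expand $A_{ws}A_{ss}^{-1}A_{sw}$ using the decomposition $A_{sw}=U_1V_1^T+U_2V_2^T$, bound each term by submultiplicativity together with $\|A_{ss}^{-1}\|=1/\sigma_{\min}(A_{ss})$, and assemble $\|\tilde{E}\|$ via the triangle inequality and the fact that a symmetric block-antidiagonal matrix with off-diagonal block $C$ has norm $\|C\|$. The only (immaterial) difference is in estimate (1), where you bound the two cross terms separately using $\|V_1\|\le\|V\|=\|A_{sw}\|$, while the paper groups the quadratic term with one cross term as $V_2U_2^TA_{ss}^{-1}A_{sw}$ and uses $\|U_1V_1^T\|\le\|A_{sw}\|+\epsilon$ for the other; both yield $2\epsilon\|A_{sw}\|/\sigma_{\min}(A_{ss})+\epsilon^2/\sigma_{\min}(A_{ss})$.
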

\vspace{0.1mm} % spacing between proposition and proof
\begin{proof}
The first part of the proposition is already shown above, so we derive the three error bounds as follows.
% first inequality
\begin{align} \label{eqn:old_error}
1. \;
 \|\tilde{E}_{ww}\|
=& \| A_{ws} A_{ss}^{-1} A_{sw} - (V_1 U_1^T) A_{ss}^{-1} (U_1 V_1^T) \| \nonumber \\[0.7ex] 
=& \| (V_1 U_1^T + V_2 U_2^T) A_{ss}^{-1} (U_1 V_1^T + U_2 V_2^T) - (V_1 U_1^T) A_{ss}^{-1} (U_1 V_1^T) \| \nonumber \\[0.5ex]
=& \| V_1 U_1^T (A_{ss})^{-1} U_2 V_2^T + V_2 U_2^T (A_{ss})^{-1} (U_1 V_1^T + U_2 V_2^T) \| \nonumber \\[0.7ex] 
\le& \| V_1 U_1^T (A_{ss})^{-1} U_2 V_2^T \| + \| V_2 U_2^T (A_{ss})^{-1} (U_1 V_1^T + U_2 V_2^T) \| \nonumber \\[0.7ex]
\le& \| V_1 U_1^T \| \|(A_{ss})^{-1}\| \|U_2 V_2^T \| + \| V_2 U_2^T\| \|(A_{ss})^{-1}\| \|(U_1 V_1^T + U_2 V_2^T) \| \nonumber \\[0.7ex]
=& \epsilon (\|A_{sw}\| + \epsilon) / \sigma_{\min}(A_{ss}) + \epsilon \|A_{sw}\| / \sigma_{\min}(A_{ss}) \nonumber \\[0.7ex]
=& 2 \epsilon \|A_{sw}\| / \sigma_{\min}(A_{ss}) + \epsilon^2 / \sigma_{\min}(A_{ss})
\end{align}
where Eq.~(\ref{eqn:offdiag}), $ \|U_2V_2^T\| = \epsilon$ and $\|U_1V_1^T\| = \|A_{sw} - U_2V_2^T\| \le \|A_{sw} \| + \epsilon $ are used, and $\sigma_{\min}$ denotes the smallest singular value of a matrix.

\begin{align*}
2. \quad  \|\tilde{E}_{nw}\| = \|\tilde{E}_{wn}\| = \|A_{ns} A_{ss}^{-1} (U_2 V_2^T)\| \le \epsilon \|A_{ns}\| / \sigma_{\min}(A_{ss}) \qquad \qquad \qquad %\quad %\! % create spacing at the end to align with the above equations
\end{align*}

\begin{align*}
3. \quad \|\tilde{E}\| &= 
\left\vert\left\vert
\begin{pmatrix}
0 & \tilde{E}_{nw} \\
\tilde{E}_{wn} & \tilde{E}_{ww}
\end{pmatrix}
\right\vert\right\vert \\[0.8ex]
% second line
&= \left\vert\left\vert
\begin{pmatrix}
0 & \tilde{E}_{nw} \\
\tilde{E}_{wn} & 0
\end{pmatrix}
+
\begin{pmatrix}
0 & 0 \\
0 & \tilde{E}_{ww}
\end{pmatrix}
\right\vert\right\vert \\[0.8ex]
% third line
&\le \left\vert\left\vert
\begin{pmatrix}
0 & \tilde{E}_{nw} \\
\tilde{E}_{wn} & 0
\end{pmatrix}
\right\vert\right\vert
+
\left\vert\left\vert
\begin{pmatrix}
0 & 0 \\
0 & \tilde{E}_{ww}
\end{pmatrix}
\right\vert\right\vert \\[0.8ex]
% fourth line
&= \|\tilde{E}_{nw}\| + \|\tilde{E}_{ww}\| \\[0.8ex]
% fifth line
&\le \epsilon (2 \|A_{sw}\| + \|A_{ns}\|) / \sigma_{\min}(A_{ss}) + \epsilon^2 / \sigma_{\min}(A_{ss}) 
\qquad \qquad \qquad \qquad %\quad % create spacing at the end to align with the above equations
\end{align*}
where we used the equality 
$
\left\vert\left\vert
\begin{pmatrix}
0 & C \\
C^T & 0
\end{pmatrix}
\right\vert\right\vert = \|C\|
$ for any matrix $C$.
\hfill \end{proof}

For ill-conditioned problems such as linear systems from ice sheet modeling the diagonal matrix block $A_{ss}$ can be nearly singular, and so $\sigma_{\min}(A_{ss})$ is very small. As a result, the error $\|\tilde{E}\|$ can be large. Worse still, due to this large error the approximate Schur complement $S_{\tilde{A}}$ may become indefinite and the Cholesky factorization of diagonal blocks can break down. This leads to a poor approximation of the exact Schur Complement $S_{A}$, an SPD matrix.

The above error analysis extends to all hierarchical solvers based on strongly admissible hierarchical matrices (with potentially minor modifications) and shows that the low-rank truncation error $\epsilon$ needs to decrease at least as fast as $\sigma_{\min}(A_{ss})$ to maintain the same error tolerance on $S_{\tilde{A}}$.

\paragraph{\textbf{With} deferred-compression scheme} Before compressing the off-diagonal matrix block $A_{sw}$ directly, we first scale $A_{sw}$ by the inverse of the Cholesky factor of $A_{ss}$. Specifically, the Cholesky factorization of the diagonal block $A_{ss} = G_s G_s^T$ is used to scale the first block row and column of $A$ as the following
\[
{\cal S}_s A {\cal S}_s^T
= 
\begin{pmatrix}
I & G_s^{-1} A_{sn} & G_s^{-1} A_{sw} \\
A_{ns} G_s^{-T} & A_{nn} & A_{nw} \\
A_{ws} G_s^{-T} & A_{wn} & A_{ww}
\end{pmatrix},
\]
where 
${\cal S}_s = 
\begin{pmatrix}
G_s^{-1} & &  \\
& I & \\
& & I
\end{pmatrix}
$.
Then the block $G_s^{-1} A_{sw}$ is compressed with a low-rank approximation. Similar to Eq.~(\ref{eqn:offdiag}), assume a low-rank decomposition of $G^{-1} A_{sw}$ as
\begin{align}\label{eqn:offdiag_new}
G_s^{-1} A_{sw} = \hat{U} \hat{V}^T = 
\begin{pmatrix}
\hat{U}_1 & \hat{U}_2
\end{pmatrix}
\begin{pmatrix}
\hat{V}_1^T \\
\hat{V}_2^T 
\end{pmatrix}
= \hat{U}_1 \hat{V}_1^T + \hat{U}_2 \hat{V}_2^T, 
\end{align}
where $\hat{U}$ is orthogonal and $\|\hat{V}_2\| = \epsilon$, a small prescribed tolerance. %Again, there are various ways of computing this kind of decomposition, such as the SVD or RRQR. 
One way to relate Eq.~(\ref{eqn:offdiag_new}) to Eq.~(\ref{eqn:offdiag}) is the following. Define $\bar{U} = G_s \hat{U}$, then Eq.~(\ref{eqn:offdiag_new}) is equivalent to $A_{sw} = \bar{U} \hat{V}^T$, where $\bar{U}$ is orthogonal in terms of the inner product defined by the SPD matrix $A_{ss}^{-1}$.

Replacing $G_s^{-1} A_{sw}$ by $\hat{U}_1 \hat{V}_1^T$ in ${\cal S}_s A {\cal S}_s^T$ leads to the compressed matrix ${\cal S}_s \hat{A} {\cal S}_s^T = compress({\cal S}_s A {\cal S}_s^T)$ as follows
\[
{\cal S}_s A {\cal S}_s^T \approx {\cal S}_s \hat{A} {\cal S}_s^T = 
\begin{pmatrix}
I & G_s^{-1} A_{sn} & \hat{U}_1 \hat{V}_1^T \\
A_{ns} G_s^{-T} & A_{nn} & A_{nw} \\
\hat{V}_1 \hat{U}_1^T & A_{wn} & A_{ww}
\end{pmatrix},
\]
where
\[
\hat{A} = 
\begin{pmatrix}
A_{ss} & A_{sn} & G_s \hat{U}_1 \hat{V}_1^T \\
A_{ns} & A_{nn} & A_{nw} \\
\hat{V}_1 \hat{U}_1^T G_s^T & A_{wn} & A_{ww}
\end{pmatrix}.
\]
Carrying out one step of Cholesky factorization on the $s$ block in ${\cal S}_s \hat{A} {\cal S}_s^T$ with
\[
\hat{{\cal L}}_s =
\begin{pmatrix}
I & & \\
- A_{ns} G_s^{-T} & I & \\
- \hat{V}_1 \hat{U}_1^T & & I
\end{pmatrix}
\]
produces the following Schur complement, another approximation of $S_A$ as follows
\begin{align} \label{eqn:sc_new}
S_{{\cal S}_s \hat{A} {\cal S}_s^T} & = 
\begin{pmatrix}
A_{nn} - A_{ns} A_{ss}^{-1} A_{sn} & A_{nw} - A_{ns} G_s^{-T} (\hat{U}_1 \hat{V}_1^T) \\
A_{wn} - (\hat{V}_1 \hat{U}_1^T) G_s^{-1} A_{sn} & A_{ww} - \hat{V}_1 \hat{V}_1^T
\end{pmatrix}.
\end{align}
\begin{proposition}
Assume Eq.(\ref{eqn:offdiag_new}) holds, the error $\hat{E}$ between the two Schur complements, namely $S_{{\cal S}_s \hat{A} {\cal S}_s^T} $ in $\hat{{\cal L}}_s {\cal S}_s \hat{A} {\cal S}_s^T \hat{{\cal L}}_s^T$ and $S_{A}$ in ${\cal C}_s A {\cal C}_s^T = {\cal L}_s {\cal S}_s A {\cal S}_s^T {\cal L}_s^T$ is the following 
\begin{align} \label{eqn:err_new}
\hat{E} & =  S_{{\cal S}_s \hat{A} {\cal S}_s^T} - S_A =
\begin{pmatrix}
0 & A_{ns} G_s^{-T} \hat{U}_2 \hat{V}_2^T \\
\hat{V}_2 \hat{U}_2^T G_s^{-1} A_{sn} &  \hat{V}_2 \hat{V}_2^T
\end{pmatrix}.
\end{align}
Moreover, the following error estimates hold \vspace{2mm}
\begin{enumerate}
\setlength\itemsep{0.6em}
\item $\|\hat{E}_{ww}\| = \epsilon^2$,
\item $\|\hat{E}_{nw}\| = \|\hat{E}_{wn}\| \le \epsilon \|A_{ns}\| / \sigma_{\min}(A_{ss})^{1/2}$, 
\item and $\|\hat{E}\| \le \|\hat{E}_{ww}\| + \|\hat{E}_{nw}\| \le \epsilon \|A_{ns}\| / \sigma_{\min}(A_{ss})^{1/2} + \epsilon^2$,
\end{enumerate}
\vspace{2mm}
where $\hat{E}_{nw}, \hat{E}_{ww}$ and $\hat{E}_{ww}$ stand for the (1,2) block, (2,1) block and (2,2) block in $\hat{E}$.
\end{proposition}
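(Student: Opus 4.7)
The plan is to compute $\hat{E}$ block-wise by subtracting $S_A$ in Eq.~(\ref{eqn:exact_sc}) from $S_{\mathcal{S}_s \hat{A} \mathcal{S}_s^T}$ in Eq.~(\ref{eqn:sc_new}), and then estimate each block with elementary matrix-norm manipulations. The (1,1) blocks agree, so $\hat{E}_{nn} = 0$. For the (1,2) block, I would use Eq.~(\ref{eqn:offdiag_new}) to write $A_{sw} = G_s(\hat{U}_1\hat{V}_1^T + \hat{U}_2\hat{V}_2^T)$, which gives $A_{ns} A_{ss}^{-1} A_{sw} = A_{ns} G_s^{-T}(\hat{U}_1\hat{V}_1^T + \hat{U}_2\hat{V}_2^T)$; subtracting the corresponding entry of~(\ref{eqn:sc_new}) cancels the $\hat{U}_1 \hat{V}_1^T$ term and leaves $\hat{E}_{nw} = A_{ns} G_s^{-T} \hat{U}_2 \hat{V}_2^T$. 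For the (2,2) block, I would expand $A_{ws} A_{ss}^{-1} A_{sw} = \hat{V}\hat{U}^T G_s^T (G_s G_s^T)^{-1} G_s \hat{U}\hat{V}^T$ and collapse it to $\hat{V}\hat{U}^T \hat{U} \hat{V}^T = \hat{V}\hat{V}^T = \hat{V}_1\hat{V}_1^T + \hat{V}_2\hat{V}_2^T$ using orthogonality of $\hat{U}$; subtracting the $\hat{V}_1 \hat{V}_1^T$ term in~(\ref{eqn:sc_new}) then yields $\hat{E}_{ww} = \hat{V}_2 \hat{V}_2^T$, matching Eq.~(\ref{eqn:err_new}).

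With the explicit forms in hand, the three estimates are short. The bound $\|\hat{E}_{ww}\| = \|\hat{V}_2 \hat{V}_2^T\| = \|\hat{V}_2\|^2 = \epsilon^2$ is immediate. For $\hat{E}_{nw}$, submultiplicativity combined with $\|\hat{U}_2\| \le 1$, $\|\hat{V}_2\| = \epsilon$, and $\|G_s^{-1}\| = 1/\sigma_{\min}(G_s) = 1/\sigma_{\min}(A_{ss})^{1/2}$ (since $A_{ss} = G_s G_s^T$) delivers the claimed $\epsilon\|A_{ns}\|/\sigma_{\min}(A_{ss})^{1/2}$ bound. The bound on $\|\hat{E}\|$ is then obtained by splitting $\hat{E}$ into its anti-diagonal piece and its $ww$-block piece, applying the triangle inequality, and reusing the identity for the norm of an anti-diagonal block matrix already invoked in the proof of the previous proposition.

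The only real subtlety, rather than a genuine obstacle, is the cancellation that produces $\hat{E}_{ww} = \hat{V}_2 \hat{V}_2^T$: pre-scaling by $G_s^{-1}$ before the low-rank truncation is exactly what makes $A_{ws} A_{ss}^{-1} A_{sw}$ collapse to $\hat{V}\hat{V}^T$ with no residual $A_{ss}^{-1}$ factor. This is the whole mechanism by which deferred compression replaces the dangerous $\epsilon/\sigma_{\min}(A_{ss})$ term from the previous proposition with the far milder $\epsilon^2$ in the $ww$-block and a mere $1/\sigma_{\min}(A_{ss})^{1/2}$ factor in the $nw$-block. I would highlight this cancellation at the start of the write-up, since it is the conceptual heart of the deferred-compression strategy; everything else reduces to routine block-matrix algebra and submultiplicativity.
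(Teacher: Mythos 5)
Your proposal is correct and follows essentially the same route as the paper: the blockwise subtraction, the cancellation $G_s^T A_{ss}^{-1} G_s = I$ combined with $\hat{U}^T\hat{U} = I$ to get $\hat{E}_{ww} = \hat{V}_2\hat{V}_2^T$, and the submultiplicativity bounds with $\sigma_{\min}(G_s) = \sigma_{\min}(A_{ss})^{1/2}$ are exactly the paper's argument. No gaps.
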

\vspace{0.1mm} % spacing between proposition and proof
\begin{proof}
We first show Eq.~(\ref{eqn:err_new}) as follows:
\begin{align*}
\hat{E}_{nw} &= A_{ns} (A_{ss}^{-1} A_{sw} - G_s^{-T} (\hat{U}_1 \hat{V}_1^T))  \\
&= A_{ns} G_s^{-T} (G_s^{-1} A_{sw} -  (\hat{U}_1 \hat{V}_1^T))  \\
&= A_{ns} G_s^{-T} \hat{U}_2 \hat{V}_2^T  \\ \\
\hat{E}_{ww} &= A_{ws} A_{ss}^{-1} A_{sw} - \hat{V}_1 \hat{V}_1^T  \\ 
&= (A_{ws} G_s^{-T}) (G_s^{-1} A_{sw}) - \hat{V}_1 \hat{V}_1^T  \\
&= (\hat{V}_1 \hat{U}_1^T + \hat{V}_2 \hat{U}_2^T) (\hat{U}_1 \hat{V}_1^T + \hat{U}_2 \hat{V}_2^T) - \hat{V}_1 \hat{V}_1^T  \\
&= \hat{V}_2 \hat{V}_2^T
\end{align*}
where Eq.~(\ref{eqn:offdiag_new}) and the orthogonality of $\hat{U}$ ($\hat{U}_1^T \hat{U}_1 = I$ and $\hat{U}_1^T \hat{U}_2 = 0$) are used.  Next, we can prove the following three error bounds easily.
\begin{align*}
\|\hat{E}_{ww}\| & = \|\hat{V}_2 \hat{V}_2^T\| = \epsilon^2  \\[10pt]
\|\hat{E}_{ww}\| & = \|A_{ns} G_s^{-T} \hat{U}_2 \hat{V}_2^T\| \le \epsilon \|A_{ns}\| / \sigma_{\min}(A_{ss})^{1/2} \\[10pt]
\|\hat{E}\| &\le \|\hat{E}_{ww}\| + \|\hat{E}_{nw}\| \le \epsilon \|A_{ns}\| / \sigma_{\min}(A_{ss})^{1/2} + \epsilon^2
\end{align*}
which finishes the proof.
\hfill \end{proof}

As the above proposition shows, the approximate Schur complement computed with the deferred-compression scheme is much more accurate than that without the scheme, especially when the problem is highly ill-conditioned. In other words, if the error tolerance is fixed, our new solver can deploy a (much) larger truncation error $\epsilon$ reducing the setup/factorization cost of a hierarchical solver significantly. For example, in our numerical experiments we will show that our new hierarchical solver ($\epsilon=10^{-2}$) performs better than the original LoRaSp solver ($\epsilon=10^{-4}$). In particular, $\hat{E}_{ww}$ is an order of magnitude smaller than $\tilde{E}_{ww}$ and does not depend on $\sigma_{\min}(A_{ss})$. Furthermore, $\hat{E}_{ww}$ is now symmetric positive semi-definite, which implies the following.

\begin{corollary}
The $S_{{\cal S}_s \hat{A} {\cal S}_s^T}^{(2,2)}$ block, i.e., $ww$/(2,2) block in $S_{{\cal S}_s \hat{A} {\cal S}_s^T}$ is SPD.
\end{corollary}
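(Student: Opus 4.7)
The plan is to exhibit the $(2,2)$ block of $S_{\mathcal{S}_s \hat{A} \mathcal{S}_s^T}$ as the sum of an SPD matrix and a symmetric positive semidefinite matrix, from which the SPD conclusion is immediate.

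First, I would read off directly from Eq.~(\ref{eqn:sc_new}) that the block in question is $A_{ww} - \hat{V}_1 \hat{V}_1^T$. Then, using the identity $S_{\mathcal{S}_s \hat{A} \mathcal{S}_s^T} = S_A + \hat{E}$ together with Eq.~(\ref{eqn:err_new}), which identifies $\hat{E}_{ww} = \hat{V}_2 \hat{V}_2^T$, I would rewrite this block as
\[
A_{ww} - \hat{V}_1 \hat{V}_1^T \;=\; \bigl(A_{ww} - A_{ws} A_{ss}^{-1} A_{sw}\bigr) \;+\; \hat{V}_2 \hat{V}_2^T.
\]
This is the essential algebraic step and the only one that requires any care; once it is in place, the rest is structural.

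Next, I would invoke the classical fact that the Schur complement of an SPD matrix with respect to a principal block is SPD. Since $A$ is SPD by hypothesis, $S_A$ is SPD, and hence its $ww$ principal submatrix $A_{ww} - A_{ws} A_{ss}^{-1} A_{sw}$ is SPD. The remaining term $\hat{V}_2 \hat{V}_2^T$ is a symmetric outer product and therefore positive semidefinite. Because the sum of an SPD matrix and a symmetric positive semidefinite matrix is SPD, the conclusion follows.

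I do not expect any real obstacle here: the corollary is essentially a consequence of the form of $\hat{E}_{ww}$ that the deferred-compression analysis has already produced. The only subtlety to flag is that this SPD-preserving feature is exactly what fails without deferred compression, where the analogous $(2,2)$ error in Eq.~(\ref{eqn:err_old}) is not of definite sign and can drive the approximate Schur complement indefinite.
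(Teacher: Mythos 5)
Your proof is correct and follows essentially the same route as the paper: both identify the $(2,2)$ block as $S_A^{(2,2)} + \hat{V}_2\hat{V}_2^T$, use that the exact Schur complement of an SPD matrix (and hence its principal $ww$ block) is SPD, and conclude since the added term is symmetric positive semidefinite. No gaps.
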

\begin{proof}
The following equality holds according to Eq.~(\ref{eqn:err_new}).
\[
S_{{\cal S}_s \hat{A} {\cal S}_s^T}^{(2,2)} = S_A^{(2,2)} + \hat{E}_{ww}
\]
Since the original matrix $A$ is SPD, the exact Schur complement $S_A$ and the block $S_A^{(2,2)}$ are both SPD. It is also obvious that $\hat{E}_{ww} = \hat{V}_2 \hat{V}_2^T$ is a symmetric positive semi-definite matrix. Therefore, $S_{{\cal S}_s \hat{A} {\cal S}_s^T}^{(2,2)}$ is SPD.
\hfill \end{proof}

In general, the matrix $S_{{\cal S}_s \hat{A} {\cal S}_s^T}$ itself is not necessarily an SPD matrix for any $\epsilon$. However, we observe that the matrix remains SPD for much larger $\epsilon$ (lower cost) with the deferred-compression scheme than that in the original algorithm.

Overall, the differences between computing an approximate Schur complement of $S_A$ with and without the deferred-compression scheme are summarized in the following table.

\begin{table}[htbp]
  \caption{Differences between computing an approximate Schur complement of $S_A$ with and without the deferred-compression (DC) scheme. $^*$ means corresponding blocks in the computed (approximate) Schur complements.}
  \centering
  \begin{tabular}{  c  c  c } \toprule
  % title
		&	Without DC	& With DC \\ \midrule
  % first line
    \rule{0pt}{6ex} Matrix & $A$ & $\begin{pmatrix}
G_s^{-1} & &  \\
& I & \\
& & I
\end{pmatrix}
A
\begin{pmatrix}
G_s^{-T} & &  \\
& I & \\
& & I
\end{pmatrix}$ \\
   % second line
   \rule{0pt}{6ex} Low rank & $A_{sw} = U V $ & 
   $G^{-1} A_{sw} = \hat{U} \hat{V}^T $ \\
   % third line
   \rule{0pt}{9ex} \begin{tabular}{@{}c@{}} Approxi- \\ mation \end{tabular} & 
   $\begin{pmatrix}
A_{ss} & A_{sn} & U_1 V_1^T \\
A_{sn}^T & A_{nn} & A_{nw} \\
V_1 U_1^T & A_{nw}^T & A_{ww}
\end{pmatrix}$ &
   $\begin{pmatrix}
I & G^{-1} A_{sn} & \hat{U}_1 \hat{V}_1^T \\
(G^{-1} A_{ns})^T & A_{nn} & A_{nw} \\
\hat{V}_1 \hat{U}_1^T & A_{nw}^T & A_{ww}
\end{pmatrix}$ \\
% line 3.5
\rule{0pt}{6ex} \begin{tabular}{@{}c@{}} Schur \\ complement \end{tabular} 
  & Eq.~(\ref{eqn:sc_old}) &  Eq.~(\ref{eqn:sc_new}) \\
% fourth line
\rule{0pt}{6ex} $ww$ block$^*$ & may be indefinite & always SPD  \\
   \rule{0pt}{6ex} \begin{tabular}{@{}c@{}} Error for \\ $ww$ blocks$^*$ \end{tabular}	& $2\epsilon \|A_{sw}\|/\sigma_{\min}(A_{ss}) + O(\epsilon^2)$ & $\epsilon^2$ \\
   % fifth line 
   \rule{0pt}{6ex} \begin{tabular}{@{}c@{}} Error for \\ $nw$ blocks$^*$ \end{tabular} 
   & $\epsilon \|A_{ns}\|/\sigma_{\min}(A_{ss})$ & $\epsilon \|A_{ns}\|/\sigma_{\min}(A_{ss})^{1/2}$ \\ \bottomrule
\end{tabular}
\end{table}

%\paragraph{Preconditioned Matrix}
\begin{comment}
where the $nn$ term is exact and SPD, the $ww$ term equals to the exact plus the error in Eq.~(\ref{eqn:new_error}) and is thus SPD as well. In general the trailing $2 \times 2$ block sub-matrix in Eq.~() is an approximation of the exact Schur complement and is not necessarily SPD. However, for practical choices of $\epsilon$, the approximation stays SPD, so a Cholesky factorization can be computed. The Cholesky factor is denoted as below

Now the approximate Cholesky factorization of the original matrix $A$ is ready, which can be used as a  preconditioner and the (symmetrically) preconditioned matrix is
\[
\begin{pmatrix}
I & & C_1 \\
& I & C_2 \\
C_1^T & C_2^T & C_3
\end{pmatrix}
\]
where $C_1 = (U_1 \Sigma_1 V_1^T) G_w^{-T} $ and $C_2 = - G_s^{-1} (A_{ns} U_2 \Sigma_2 V_2^T) G_w^{-T} $.
\end{comment}

\section{Improved LoRaSp Solver} \label{CC:sec:algorithm}

In this section, we complete the algorithm description of our new hierarchical solver obtained by implementing the deferred-compression technique in the original LoRaSp solver. Our goal is to solve an (ill-conditioned) SPD linear system 
\begin{equation} \label{eqn:ax=b}
A x = b
\end{equation}
and our solver is based on a clustering of the unknown variables in Eq.~(\ref{eqn:ax=b}).

\paragraph{Matrix Partitioning} Define $G_A=(V, E)$ as the (undirected) graph corresponding to the symmetric matrix $A$: vertices in $G_A$ correspond to row/column indexes in $A$, and an edge $E_{p,q} = (p,q)$ exists between vertices $p$ and $q$ if $A(p,q) \not= 0$.
A clustering of unknown variables in Eq.~(\ref{eqn:ax=b}) is equivalent to a partitioning of the graph $G_A$. Graph partitioning is a well-studied problem and can be computed algebraically using techniques such as spectral partitioning and multilevel methods in existing high-performance packages, such as METIS/ParMETIS \cite{CC:karypis1998fast}, Scotch \cite{CC:chevalier2008pt} and Zoltan \cite{CC:boman2012zoltan}.
%The vertex partitioning gives a partitioning of the rows/columns in the matrix.

Our hierarchical solver computes an approximate factorization of $A$ by compressing fill-in blocks generated during Gaussian elimination. The key observation is that the fill-in blocks have low-rank structures, i.e., their singular values decay fast. Intuitively, the inverse of a diagonal block in the discretization matrix corresponds to the discrete Green's function of a local elliptic PDE, which have numerically off-diagonal matrix blocks. The same low-rank property also carries over to the Schur complement~\cite{CC:amestoy2015improving,CC:aminfar2016fast,CC:pouransari2017fast,CC:xia2010fast,ho2016hierarchical}. 

Below, we first illustrate applying the deferred-compression technique and the ``low-rank elimination'' step (``scaled low-rank elimination'' in the following) to one cluster of unknown variables in Section~\ref{CC:subsec:lre}. Then we present the whole algorithm in Section~\ref{CC:subsec:lorasp} and complexity analysis in Section~\ref{subsec:analysis}.

\subsection{Scaled Low-rank Elimination} \label{CC:subsec:lre}

Let $\Pi = \cup_{i=0}^{m-1} \pi_i$ denote a clustering of all unknown variables in Eq.~(\ref{eqn:ax=b}), and without loss of generality, assume that matrix $A$ is partitioned and ordered accordingly, e.g., the first block row/column corresponds to $\pi_0$. Two clusters $\pi_p$ and $\pi_q$ are defined as ``neighbors'' if the matrix block $A(\pi_p, \pi_q) \not= 0$. In other words, the neighbors of a cluster is the set of adjacent clusters in $G_A$.

To use the ``scaled low-rank elimination'' step, we partition matrix $A_0=A$ in the familiar way
\begin{equation*}
A_0 = 
\begin{pmatrix}
A_{ss} & A_{sn} & A_{sw} \\
A_{ns} & A_{nn} & A_{nw} \\
A_{ws} & A_{wn} & A_{ww}
\end{pmatrix}
\end{equation*}
where the ``s'' block corresponds to $\pi_0$, ``n'' block corresponds to neighbors of $\pi_0$ and ``w'' block corresponds to the rest. Based on our definition of neighbors above, $A_{sw} = A_{ws} = 0$. In this case and generally if $A_{sw} = A_{ws} = 0$, the ``scaled low-rank elimination'' step is reduced to normal block Cholesky factorization.

As in Section~\ref{CC:sec:diagonal_scaling}, denote ${\cal C}_s$ as the matrix corresponding to one step of block Cholesky factorization and denote $A_1$ as the Schur complement, i.e.,
\[
{\cal C}_s A_0 {\cal C}_s^T = 
\begin{pmatrix}
I & 0 \\
0 & A_1
\end{pmatrix}.
\]
Again, we can partition $A_1$ into the following $3 \times 3$ block matrix
\begin{equation*}
A_1 = 
\begin{pmatrix}
A_{ss}^{(1)} & A_{sn}^{(1)} & A_{sw}^{(1)} \\
A_{ns}^{(1)} & A_{nn}^{(1)} & A_{nw}^{(1)} \\
A_{ws}^{(1)} & A_{nw}^{(1)} & A_{ww}^{(1)}
\end{pmatrix},
\end{equation*}
where the ``s'' block corresponds to $\pi_1$, the ``n'' block corresponds to neighbors of $\pi_1$ and the ``w'' block includes all remaining vertices. Assume $A_{sw}^{(1)} \not= 0, A_{ws}^{(1)} \not= 0$, which contains fill-in generated from previous elimination of $\pi_0$. To simplify notations, we will drop the superscription of matrix blocks in $A_1$.
%without causing any confusion.

The ``scaled low-rank elimination'' step involves three operators: scaling operator ${\cal S}$, sparsification operator ${\cal E}$ and Gaussian elimination operator ${\cal G}$. The scaling operator ${\cal S}_s$ is defined as follows
\begin{equation} \label{CC:eqn:scaling}
{\cal S}_s = 
\begin{pmatrix}
G_s^{-1} & &  \\
& I & \\
& & I
\end{pmatrix},
\end{equation}
where $A_{ss} = G_s G_s^T$ is the Cholesky factorization.

After the scaling operator is applied, the off-diagonal block $G_s^{-1} A_{sw}$ in ${\cal S}_s A_1 {\cal S}_s^T$ is compressed with low-rank approximation, as in Eq.~(\ref{eqn:offdiag_new}). This compression step ${\cal S}_s A_1 {\cal S}_s^T \approx compress({\cal S}_s A_1 {\cal S}_s^T)$ is exactly the same as in the deferred-compression scheme. Instead of eliminating the ``s'' block directly, the next step applies the sparsification operator
\begin{equation} \label{CC:eqn:sparse}
{\cal E}_s = 
\begin{pmatrix}
\hat{U}^T & &  \\
& I & \\
& & I
\end{pmatrix}
\end{equation}
and introduces a zero block as below
\begin{align*}
{\cal E}_s \; compress({\cal S}_s A_1 {\cal S}_s^T) \; {\cal E}_s^T &=
{\cal E}_s
\begin{pmatrix}
I & G_s^{-1} A_{sn} & \hat{U}_1 \hat{V}_1^T \\
A_{ns} G_s^{-T} & A_{nn} & A_{nw} \\
\hat{V}_1 \hat{U}_1^T & A_{wn} & A_{ww}
\end{pmatrix}
{\cal E}_s^T \\
&= \begin{pmatrix}
I & & \hat{U}_1^T G_s^{-1} A_{sn} & \hat{V}_1^T \\
 & I & \hat{U}_2^T G_s^{-1} A_{sn} & 0 \\
A_{ns} G_s^{-T} \hat{U}_1 & A_{ns} G_s^{-T} \hat{U}_2 & A_{nn} & A_{nw} \\
\hat{V}_1 & 0 & A_{wn} & A_{ww}
\end{pmatrix}.
\end{align*}
Notice $\hat{U}^T \hat{U}_1 = \begin{pmatrix}
I \\ 0
\end{pmatrix}$ where the identity has the same size as the number of columns in $\hat{U}_1$, i.e., rank of the low-rank approximation in Eq.~(\ref{eqn:offdiag_new}).

After the sparsification step, a cluster of unknown variables $\pi_s$ can be split into ``coarse'' unknown variables $\pi_s^c$ and ``fine'' unknown variables $\pi_s^f$, where $\pi_s^f$ involves no fill-in. Then $\pi_s^f$ is eliminated, which does not propagate any existing fill-in (no level-2 fill-in introduced). 

The Gaussian elimination operator 
\begin{equation} \label{CC:eqn:elim}
{\cal G}_s = 
\begin{pmatrix}
I&&& \\
&I&&\\
& - A_{ns} G_s^{-T} \hat{U}_2  &I&\\
&&&I
\end{pmatrix}
\end{equation}
eliminates the ``fine'' unknown variables $\pi_s^f$ as follows
\[
{\cal G}_s {\cal E}_s \; compress({\cal S}_s A_1 {\cal S}_s^T) \; {\cal E}_s^T {\cal G}_s^T = \begin{pmatrix}
I & & \hat{U}_1^T G_s^{-1} A_{sn} & \hat{V}_1^T \\
 & I &  &  \\
A_{ns} G_s^{-T} \hat{U}_1 &  & X_{nn} & A_{nw} \\
\hat{V}_1 &  & A_{wn} & A_{ww}
\end{pmatrix},
\]
where $X_{nn} = A_{nn} - A_{ns} G_s^{-T} \hat{U}_2 \hat{U}_2^T G_s^{-1} A_{sn}$.

Last, we introduce an auxiliary permutation operator, $P_s$, to permute rows and columns corresponding to $\pi_s^c$ to the end. $P_s$ is defined as
\begin{equation}\label{CC:eqn:perm}
P_s =
\left(
\begin{array}{c|c|c|c}
&I&&  \\ \hline
&&I & \\ \hline
&&&I \\ \hline
I&&&
\end{array}
\right).
\end{equation}

Finally, define the ``scaled low-rank approximation'' operator ${\cal W}_s = P_s {\cal G}_s {\cal E}_s {\cal S}_s$ and ${\cal W}_s A_1 {\cal W}_s^T$ selects and eliminates the fine DOFs in $\pi_s$. To summarize, we have derived 
\[
{\cal W}_s A_1 {\cal W}_s^T \approx P_s {\cal G}_s {\cal E}_s \; compress({\cal S}_s A_1 {\cal S}_s^T) \; {\cal E}_s^T {\cal G}_s^T P_s^T =
\begin{pmatrix}
I & \\
& A_2
\end{pmatrix}.
\]

\subsection{Entire Algorithm} \label{CC:subsec:lorasp}

% discuss (1) the loop and (2) recursive idea and (3) approximate factorization for solve

We have introduced the ``scaled low-rank elimination'' step for one cluster. The algorithm repeatedly applies this step on all clusters in $\Pi = \cup_{i=0}^{m-1} \pi_i$. This process is equivalent to computing an approximate factorization of the input SPD matrix $A$, subject to the error of low-rank approximations. After all clusters are processed, one is left with a linear system consisting of the ``coarse'' unknown variable $\cup_{i=0}^{m-1} \pi_i^c$, and we can apply the same idea on this coarse system. The entire algorithm is shown in Algorithm~\ref{CC:alg:lorasp}.

\begin{algorithm}[htbp]
  \caption{Hierarchical solver: factorization phase}
  \label{CC:alg:lorasp}   
  \begin{algorithmic}[1]
    \Procedure{Hierarchical\_Factor}{$A$}
    \If{the size of $A$ is small enough}
    \State{Factorize $A$ with the conventional Cholesky factorization}
    \State \Return
    \EndIf
    \State{Partition the graph of $A$ and obtain vertex clusters $\Pi = \cup_{i=0}^{m-1} \pi_i$} 
    \Statex \Comment{{\scriptsize $m$ is chosen to get roughly constant cluster sizes}}
    \State{$A_0 \gets A$}
    \For{$i \gets 0$ \textbf{to} $m-1$}
    \State{$A_{i+1} \gets$ Scaled\_LowRank\_Elimination($A_i$, $\Pi$, $\pi_i$)}
    \EndFor \Comment{{\scriptsize $A_m = {\cal W}_{m-1} \ldots {\cal W}_1 {\cal W}_0 \, A \, {\cal W}_0^T {\cal W}_1^T \ldots {\cal W}_{m-1}^T$}}
    \State{Extract $A_c$ from the block diagonal matrix $A_m \approx \begin{pmatrix}I&\\ & A_c \end{pmatrix}$}
    \Statex \Comment{{\scriptsize $A_c$ is the Schur complement for the coarse DOFs}}
    \State{$A_c^{fac} \gets$ \Call{Hierarchical\_Factor}{$A_c$}} 
    \Statex \Comment{{\scriptsize Recursive call with a smaller matrix}}
    \State \Return 
    $
    A^{fac} =
    {\cal W}_0^{-1} {\cal W}_1^{-1} \ldots {\cal W}_{m-1}^{-1} 
    \begin{pmatrix}
    I & \\
    & A_c^{fac}
    \end{pmatrix}
    {\cal W}_{m-1}^{-T} \ldots {\cal W}_1^{-T} {\cal W}_0^{-T}
    $
    \Statex \Comment{{\scriptsize $A_c^{fac}$ is not written out explicitly}}
    \EndProcedure
    \Statex
    \Procedure{Scaled\_LowRank\_Elimination}{$A_i$, $\Pi$, $\pi_i$}
    \State{Extract $\bar{A}$ from $A_i \approx \begin{pmatrix} I&\\ &\bar{A}\end{pmatrix}$}
    \State{Compute the low-rank elimination operator $\bar{{\cal W}}_i = P_i {\cal G}_i {\cal E}_i {\cal S}_i$ based on $\bar{A}$}
    \Statex \Comment{{\scriptsize ${\cal E}_i, {\cal G}_i \text{ and } P_i$ are defined in Eq.~\ref{CC:eqn:scaling}, Eq.~\ref{CC:eqn:elim} and Eq.~\ref{CC:eqn:perm}}}
    \State{${\cal W}_i \gets \begin{pmatrix} I&\\ & \bar{{\cal W}}_i \end{pmatrix}$}
    \Statex \Comment{{\scriptsize ${\cal W}_i$ has the same size as $A_i$}}
    \State \Return ${\cal W}_i A_i {\cal W}_i^T$
	\EndProcedure
    \Statex \Comment{{\scriptsize Notation: $a \gets b$ means assign the value $b$ to $a$, whereas $a=b$ means they are equivalent}}
  \end{algorithmic}
\end{algorithm}

Similar to sparse direct solvers, Algorithm~\ref{CC:alg:lorasp} outputs an approximate factorization of the original matrix $A$, which is used to solve the linear system $Ax=b$. Since ${\cal S}_i$ and ${\cal E}_i$ are block diagonal matrices, ${\cal G}_i$ is a triangular matrix and $P_i$ is a permutation matrix, the solve phase follows the standard forward and backward substitution, as shown in Algorithm~\ref{CC:alg:lorasp_solve}.

\begin{algorithm}[htbp]
  \caption{Hierarchical solver: solve phase}
  \label{CC:alg:lorasp_solve}   
  \begin{algorithmic}[1]
    \Procedure{Hierarchical\_Solve}{$A^{fac}$, $b$}
    \State{$y \gets$ \Call{Forward\_Substitution}{$A^{fac}$, $b$}}
    \State{$x \gets$ \Call{Backward\_Substitution}{$A^{fac}$, $y$}}
    \State \Return $x$
    \EndProcedure
    \Statex
    \Procedure{Forward\_Substitution}{$A^{fac}$, $b$}
    \State{$y \gets b$}
    \For{$i \gets 0$ \textbf{to} $m-1$}
    \State{$y \gets {\cal W}_i \, y$ \Comment{{\scriptsize $y$ is overwritten}}}
    \EndFor \Comment{{\scriptsize $y = (y_c, y_f)$ is of the concatenation of $y_f$ and $y_c$}}
    \State{Extract $y_f$ and $y_c$ from $y$}
    \Statex \Comment{{\scriptsize $y_f$ and $y_c$ correspond to the fine DOFs and the coarse DOFs}}
    \State{$y_c \gets$ \Call{Forward\_Substitution}{$A_c^{fac}$, $y_c$} \Comment{{\scriptsize $y_c$ is overwritten}}}
    \State \Return $y = (y_f, y_c)$ \Comment{{\scriptsize output the concatenation of $y_f$ and $y_c$}}
    \EndProcedure
    \Statex    
    \Procedure{Backward\_Substitution}{$A^{fac}$, $y$}
    \State{$x \gets y$}
    \For{$i \gets m-1$ \textbf{to} $0$}
    \State{$x \gets {\cal W}_i^T \, x$ \Comment{{\scriptsize $x$ is overwritten}}}
    \EndFor \Comment{{\scriptsize $x = (x_f, x_c)$ is of the concatenation of $x_c$ and $x_f$}}
    \State{Extract $x_f$ and $x_c$ from $x$}
    \Statex \Comment{{\scriptsize $x_f$ and $x_c$ correspond to the fine DOFs and the coarse DOFs}}
    \State{$x_c \gets$ \Call{Backward\_Substitution}{$A_c^{fac}$, $x_c$} \Comment{{\scriptsize $x_c$ is overwritten}}}
    \State \Return $x = (x_f, x_c)$ \Comment{{\scriptsize Output the concatenation of $x_f$ and $x_c$}}
    \EndProcedure
    \Statex \Comment{{\scriptsize Notation: $a \gets b$ means assign the value $b$ to $a$, whereas $a=b$ means they are equivalent}}
  \end{algorithmic}
\end{algorithm}

\subsection{Complexity Analysis} \label{subsec:analysis}

The computational cost and memory requirement of the original LoRaSp method and the corresponding parallel algorithm are analyzed in \cite{CC:pouransari2017fast} and \cite{CC:chen2018hierarchical}, respectively. A key assumption of these analyses is that ranks of the low-rank truncations can be bounded from above. We observe this in practice, but it is not possible to guarantee this boundedness without making additional hypotheses on the input matrix. The behavior of ranks in hierarchical matrices has been studied in several existing papers~\cite{bebendorf2003existence,bebendorf2005efficient,chandrasekaran2010numerical}. Here, we make a similar assumption to earlier works on the boundedness of ranks, which are based on ideas concerning the underlying Green's function that are related to standard multipole estimates~\cite{greengard1987fast,greengard1997new}.

Below we rephrase Theorem 5.4 in~\cite{CC:pouransari2017fast} and state that it holds as well for the improved LoRaSp solver when similar assumptions are made as in~\cite{CC:pouransari2017fast}. Complexity analysis of the corresponding parallel algorithm is summarized in  Theorem~\ref{thm:parallel}, which is again a rephrase of results in~\cite{CC:chen2018hierarchical}. Note that memory and the solve time have the same complexity. Intuitively, the solve phase touches every nonzero once.

\begin{theorem} \label{thm:linear}
In the (improved) LoRaSp algorithm, the computational cost of the factorization is $O(N r^2)$, and the computational cost of the solve (per iteration) and the memory consumption both scale as $O(N r)$, where $N$ is the problem size and $r$ is the largest cluster size at the first/finest level (level 0), if the following two conditions hold:
\begin{enumerate}
\item for every cluster of unknown variables, the number of neighbor clusters is bounded by a constant.
\item the largest cluster size at the first level (level 0), $r$, is bounded by a constant; and $r_i$, the largest cluster size at level $i$, satisfies the relationship that $r_i \le \alpha^i \, r$, where $0 < \alpha < 2^{1/3}$.
\end{enumerate}
\end{theorem}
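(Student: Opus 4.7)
The plan is to reduce the result to the complexity analysis of the original LoRaSp solver in~\cite{CC:pouransari2017fast} by showing that the deferred-compression scheme contributes only constant-factor overhead at each cluster-elimination step. Concretely, I would proceed as follows.

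First, I would examine the work performed by the ``scaled low-rank elimination'' step of Section~\ref{CC:subsec:lre} on a single cluster $\pi_i$ of size $r_i$. The new operations introduced relative to the original LoRaSp are: (a) forming the Cholesky factor $G_s$ of $A_{ss}$, (b) applying $G_s^{-1}$ to the off-diagonal blocks $A_{sn}$ and $A_{sw}$ before compression, and (c) the inverse triangular solves appearing inside $\mathcal{S}_s$, $\mathcal{G}_s$. Under assumption~(1), the number of neighboring clusters is $O(1)$, so each off-diagonal row-block $[A_{sn}\ A_{sw}]$ associated with $\pi_i$ has at most $O(r_i)$ columns that interact with $\pi_i$ in any nontrivial way. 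Therefore the Cholesky factorization, the triangular solves, the RRQR of the scaled block $G_s^{-1}A_{sw}$, and the resulting update to neighbor blocks each cost $O(r_i^3)$. Crucially, this matches the per-cluster bound used by~\cite{CC:pouransari2017fast} for the unscaled elimination, so the asymptotic per-cluster cost is unchanged.

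Next, I would aggregate over clusters and levels, following the argument of Theorem~5.4 in~\cite{CC:pouransari2017fast}. At level $i$, the number of clusters is $O(N/(2^i r))$ (each level of coarsening roughly halves the number of degrees of freedom, because the ranks returned by the low-rank approximation are absorbed into a constant fraction of coarse DOFs per cluster). Using assumption~(2), $r_i \le \alpha^i r$, the total factorization work is
\begin{equation*}
\sum_{i=0}^{L} \frac{N}{2^i r}\cdot O(r_i^3) \;=\; O(N r^2)\sum_{i=0}^{L} \left(\frac{\alpha^3}{2}\right)^i,
\end{equation*}
which is $O(Nr^2)$ provided $\alpha^3/2<1$, i.e., $\alpha<2^{1/3}$, giving the claimed factorization complexity. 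For the memory and per-iteration solve cost, each level stores only the operators $\mathcal{S}_i,\mathcal{E}_i,\mathcal{G}_i,P_i$ associated with its clusters; under the same assumptions each such operator has $O(r_i^2)$ nonzeros and a solve touches each of them once, yielding $\sum_i (N/(2^i r))\cdot O(r_i^2) = O(Nr)\sum_i (\alpha^2/2)^i$, which again converges (more easily, since $\alpha^2/2<1$ whenever $\alpha<2^{1/3}$).

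Finally, I would check that the recursion in Algorithm~\ref{CC:alg:lorasp} on the coarse system $A_c$ inherits the same two assumptions. Neighbor bounds are preserved because the sparsification step does not increase the neighborhood structure beyond what is already present after Gaussian elimination of the fine DOFs, and the rank bound carried to the next level is exactly the hypothesis $r_{i+1}\le \alpha\, r_i$. The main obstacle, and the one that forces assumption~(2), is precisely this control of rank growth across levels: the deferred-compression scheme guarantees an accurate Schur complement (by Proposition~2) but does not by itself bound the rank of the next-level off-diagonal blocks. As in~\cite{CC:pouransari2017fast}, this has to be taken as a hypothesis motivated by multipole-type estimates on the underlying Green's function, after which the rest of the bookkeeping is a geometric sum.
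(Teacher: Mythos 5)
Your proposal is correct and takes essentially the same route as the paper, which offers no independent argument but simply rephrases Theorem~5.4 of the original LoRaSp analysis and asserts that the deferred-compression step leaves the per-cluster asymptotics unchanged --- precisely the reduction you carry out, with the $O(r_i^3)$ per-cluster accounting and the geometric sums over levels (yielding the threshold $\alpha < 2^{1/3}$) made explicit.
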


\begin{theorem} \label{thm:parallel}
Assume the linear system is evenly distributed among all processors, the conditions in Theorem~\ref{thm:linear} hold and all clusters have $r$ unknown variables. The computational cost of the factorization and the solve (same as the memory consumption) are $O(N r^2 / p)$ and $O(N r / p)$ on every processor, where $p$ is the number of processors. Further, for every processor, the amount of communication is 
\[
O\bigg( r^2 \; \big(\frac{N}{rp} \big)^{2/3} \bigg) 
= O\bigg( \Big(\frac{N r^2}{p} \Big)^{2/3} \bigg)
\]
for a 3D underlying subdomain, and the number of messages sent by every processor is 
\[
O\bigg(\log\big(\frac{N}{rp}\big)\bigg) + O(\log p).
\]
\end{theorem}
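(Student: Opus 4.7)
}

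The plan is to derive the per-processor computation bounds directly from Theorem~\ref{thm:linear} by invoking the even-distribution hypothesis, and then to obtain the communication and message counts by a surface-to-volume argument on the subdomain owned by each processor, combined with a level-by-level accounting along the elimination tree. Since the theorem is stated as a rephrasing of the analysis in \cite{CC:chen2018hierarchical}, the key is to verify that the deferred-compression modification introduced in Section~\ref{CC:sec:diagonal_scaling} does not alter the data/task-dependency graph of the parallel LoRaSp algorithm, which is precisely the third novelty claimed in the introduction. Once this is established, the costs per processor can simply be divided by $p$.

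First I would show the computation bounds. By Theorem~\ref{thm:linear}, the total factorization work is $O(Nr^2)$ and the total solve/memory is $O(Nr)$. Under the assumption that the unknowns are evenly distributed among the $p$ processors, and because the scaled low-rank elimination step at each cluster is local to that cluster (the additional scaling by $G_s^{-1}$ touches only the block column/row already accessed during elimination), the parallel work load per processor is balanced up to a constant. This yields the $O(Nr^2/p)$ and $O(Nr/p)$ bounds.

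Next I would bound the communication volume. Each processor owns $N/(rp)$ clusters forming a 3D subdomain. The inter-processor data exchanged at every level of the hierarchy corresponds to clusters on the boundary of this subdomain, since only clusters whose neighbor lists cross the processor boundary must be sent. For a 3D subdomain of volume $V = N/(rp)$ clusters, the boundary has $O(V^{2/3})$ clusters, and each cluster exchanges blocks of size at most $O(r^2)$ (diagonal or interaction blocks between two clusters of size $r$). Summed over the $O(\log(N/(rp)))$ levels where the cluster size stays bounded by a constant multiple of $r$ (using the $r_i \le \alpha^i r$ condition with $\alpha < 2^{1/3}$, so that the per-level communication forms a geometric series in 3D), the total volume remains $O\bigl(r^2 (N/(rp))^{2/3}\bigr)$, which rewrites as $O((Nr^2/p)^{2/3})$. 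The key point is that the $\alpha < 2^{1/3}$ assumption is exactly what keeps the surface-area sum from blowing up as the hierarchy coarsens.

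Finally I would account for messages. At each of the $O(\log(N/(rp)))$ local hierarchy levels, a processor exchanges a constant number of messages with each of its $O(1)$ geometric neighbors (bounded neighbor count is condition (1) of Theorem~\ref{thm:linear}), contributing the $O(\log(N/(rp)))$ term. Once the problem is coarsened to a size comparable with $p$, the remaining coarse solve requires global coordination along a reduction tree of depth $O(\log p)$, contributing the second $O(\log p)$ term. The main obstacle I expect is the careful verification that the geometric-series argument closes in the communication-volume step: one must show that the $\alpha < 2^{1/3}$ hypothesis indeed dominates the growth of per-cluster message size $r_i^2$ against the shrinking number of boundary clusters at coarser levels, so that the total is governed by the finest level. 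This argument is identical to the one in \cite{CC:chen2018hierarchical}, and since the deferred-compression scheme preserves both the sparsity pattern and the communication pattern of the original LoRaSp algorithm, the bounds carry over without modification.
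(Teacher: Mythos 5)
Your proposal is correct and follows essentially the same route as the paper, which offers no self-contained proof of this theorem but simply presents it as a restatement of the analysis in the cited parallel-LoRaSp work, justified by the observation (made in the introduction) that deferred compression leaves the data and task dependencies of the parallel algorithm unchanged. Your sketch --- dividing the Theorem~\ref{thm:linear} totals by $p$ under the even-distribution hypothesis, the surface-to-volume bound $O(r^2 (N/(rp))^{2/3})$ for communication volume, and the $O(\log(N/(rp))) + O(\log p)$ message count from local levels plus the coarse reduction tree --- is a faithful reconstruction of that deferred argument, and your emphasis on verifying that the scaling step is local to blocks already touched during elimination is exactly the point the paper relies on.
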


\section{Ice Sheet Model} \label{CC:sec:stokes}
%Although many high fidelity models exist for ice sheets, 
We focus on the first-order Stokes model~\cite{CC:tezaur2015albany}. This simplified model preserves sufficient accuracy for simulating the flow over most parts of an ice sheet and is computationally attractive when compared to a full Stokes model. The ice sheet model is discretized with a Galerkin finite element method using either bilinear or trilinear basis functions on tetrahedral or hexahedral elements, respectively. Further details of that underlying discretization can be found in~\cite{CC:tezaur2015albany, CC:tezaur2015scalability}. Below, we provide details on the partial differential equation (PDE) and the corresponding boundary conditions.

\subsection{Stokes Formulation and Discretization}

The goal of an ice sheet model is to solve for the $x$ and $y$ components of the ice velocity. These two components are approximated by the following elliptic system of PDEs:
\begin{equation} \label{CC:eqn:stokes}
\begin{cases}
& - \nabla \cdot (2u \dot{\boldsymbol{\epsilon}}_1) + \rho g \frac{\partial s}{\partial x} = 0 \\
& - \nabla \cdot (2u \dot{\boldsymbol{\epsilon}}_2) + \rho g \frac{\partial s}{\partial y} = 0
\end{cases}
\end{equation}
where $\mu$ is the “effective” viscosity, $\rho$ is ice density,  $g$ is the gravitational acceleration, and $s \equiv s(x, y)$ denotes the upper boundary surface. The $\boldsymbol{\epsilon}_i$ are approximations to the effective strain rate tensors:
\begin{equation}
\dot{\boldsymbol{\epsilon}}_1^T = (2 \dot{\epsilon}_{xx} + \dot{\epsilon}_{yy}, \dot{\epsilon}_{xy}, \dot{\epsilon}_{xz}) \quad \text{and} \quad \dot{\boldsymbol{\epsilon}}_2^T = (  \dot{\epsilon}_{xy}, \dot{\epsilon}_{xx} + 2\dot{\epsilon}_{yy}, \dot{\epsilon}_{yz}) 
\end{equation}
where
\begin{equation}
\dot{\epsilon}_{xx} = \frac{\partial u}{\partial x}, \quad \dot{\epsilon}_{yy} = \frac{\partial v}{\partial y}, \quad \dot{\epsilon}_{xy} = \frac{1}{2}(\frac{\partial u}{\partial x} + \frac{\partial v}{\partial y}), \quad \dot{\epsilon}_{xz} = \frac{1}{2} \frac{\partial u}{\partial z}, \quad \dot{\epsilon}_{yz} = \frac{1}{2} \frac{\partial v}{\partial z} .
\end{equation}
%In Eq.~\ref{CC:eqn:stokes}, the 
Nonlinearity arises from the “effective” viscosity, which is approximated by
\begin{equation}
\mu = \frac{1}{2} A^{-\frac{1}{n}} \; \dot{\epsilon}_e^{-\frac{2}{n}},
\end{equation}
using \textit{Glen's law}~\cite{CC:cuffey2010physics, CC:nye1957distribution} to model the ice rheology. Here, $\dot{\epsilon}_e$ is the effective strain rate given by
\begin{equation}
\dot{\epsilon}_e^2 \equiv \dot{\epsilon}_{xx}^2 + \dot{\epsilon}_{yy}^2 + \dot{\epsilon}_{xx}\dot{\epsilon}_{yy} + \dot{\epsilon}_{xy}^2 + \dot{\epsilon}_{xz}^2 + \dot{\epsilon}_{yz}^2
\end{equation} 
and $A$ is a temperature-dependent factor that can be described through an Arrhenius relation~\cite{CC:cuffey2010physics}. %The exponent $n$ typically takes values between 1 and 4. 
In this work, we take $n = 3$, as is commonly done. A combination of Newton's method and continuation generates a sequence of linear systems for the new hierarchical solver. %, the efficient solution of which is the main focus of this paper.

%Note that there exists a three-dimensional null space of the PDE system in Eq.~\ref{CC:eqn:stokes} when boundary conditions are not imposed, which corresponds to rigid body modes. This space is spanned by functions associated with horizontal translations $(u = 1, v = 0)$ and $(u = 0, v = 1)$ and rotation $(u = -y, v = x)$; see, for example,~\cite{CC:schoof2010coulomb}.

%The boundary conditions in the ice sheet model are as follows. 
On the top boundary, a homogeneous Neumann condition is prescribed: $\dot{\boldsymbol{\epsilon}}_1 \cdot \boldsymbol{n} = \dot{\boldsymbol{\epsilon}}_2 \cdot \boldsymbol{n} = 0$, where $\boldsymbol{n}$ is the outward facing normal vector to the upper surface. On the bottom boundary, a Robin condition is used:
\begin{equation} \label{CC:eqn:bc}
\begin{aligned}
2 \mu \dot{\boldsymbol{\epsilon}}_1 \cdot \boldsymbol{n} + \beta u = 0 \\
2 \mu \dot{\boldsymbol{\epsilon}}_2 \cdot \boldsymbol{n} + \beta v = 0 
\end{aligned}
\end{equation}
where $\beta \equiv \beta (x,y) \ge 0$ is the basal sliding (or friction) coefficient that in this paper can be viewed as an already known field.
%\footnote{In practice,  is obtained through an inverse problem formulation to best match measurements while retaining certain desirable properties~\cite{CC:perego2014optimal}.} 
Large $\beta$ (e.g., $\beta = 10^4$ kPa yr m$^{-1}$) corresponds to a quasi-no-slip condition, while small $\beta$ implies a weak frictional force, corresponding to a thawed ice-bed interface that allows for some degree of slip tangential to the bedrock. Under floating ice shelves, $\beta$ is often taken as identically equal to zero, corresponding to a frictionless boundary. Fig.~\ref{CC:fig:partition} (left) shows the distribution of $\beta$ in Antarctica. On the lateral boundary, a dynamic Neumann condition (referred to as “open-ocean” or “floating ice”) is used:
\begin{equation}
\begin{aligned}
2 \mu \dot{\boldsymbol{\epsilon}}_1 \cdot \boldsymbol{n} - \rho g (s-z) \boldsymbol{n} = \rho_w g \max(z, 0) \boldsymbol{n} \\
2 \mu \dot{\boldsymbol{\epsilon}}_2 \cdot \boldsymbol{n} - \rho g (s-z) \boldsymbol{n} = \rho_w g \max(z, 0) \boldsymbol{n}
\end{aligned}
\end{equation}
where $\rho_w$ denotes the density of water and $z$ is the elevation above sea level. This condition is derived under a hydrostatic equilibrium assumption between the ice shelf and the air (or water) that surrounds it~\cite{CC:macayeal1996ice}.

\subsection{Extruded Partitioning for Ice Sheets} \label{CC:sec:partition}

The improved LoRaSp method is based on an extruded partitioning of a three-dimensional extruded mesh, which logically corresponds to a tensor product of a two-dimensional unstructured mesh in the $x$, $y$ directions with a one-dimensional mesh in the $z$ direction. Specifically, one layer of the three-dimensional extruded mesh, i.e., an unstructured two-dimensional mesh, is partitioned using a general graph partitioner, such as METIS/ParMETIS~\cite{CC:karypis1998fast}, Scotch/PT-scotch~\cite{CC:chevalier2008pt}, and Zoltan~\cite{CC:boman2012zoltan}; the partitioning result is then extruded in the third dimension such that mesh vertices lying on the same extruded line always belong to the same cluster. The motivation of our extruded partitioning scheme is that a mesh point is closer to its vertical neighbors than its horizontal neighbors because vertical coupling is stronger than its horizontal counterpart in ice sheets modeling. Fig.~\ref{CC:fig:partition} (right) shows the partitioning result of the mesh used for Antarctic ice sheet modeling (the extruded dimension is not shown).

\begin{figure}[htbp]
\begin{center}
%\scalebox{0.33}{\includegraphics[natwidth=610,natheight=642]{}}
%\scalebox{0.17}{\includegraphics[natwidth=610,natheight=642]{}} 
\caption{Antarctic ice sheet modeling: (left) distribution of the basal sliding coefficient; (right) partitioning of the two-dimensional mesh, i.e, one layer of the three-dimensional extruded mesh.}
\label{CC:fig:partition}
\end{center}
\end{figure}

Note the extruded partitioning scheme does not assume that the mesh spacing in the extruded dimension is uniform, or that mesh vertexes residing on the same mesh layer have the same $z$ coordinate value. For a number of practical reasons, vertically extruded meshes are commonly employed in ice sheet modeling. 
%These meshes are also natural for ice sheet simulation with the assumptions made for deriving the first order Stokes approximation, which effectively relies on large disparities between vertical and horizontal scales. 
In addition to the use in ice sheet modeling, extruded meshes are also heavily used in other geophysical modeling applications (e.g., atmospheric and oceanic, oil/gas, carbon sequestration) and arise frequently in engineering simulations involving thin structures.

%As the original LoRaSp solver, the new method requires a partitioning of the rows/columns of $A$ to start with. The matrix partitioning does not affect the correctness or the robustness of the solver, but may affect the performance. In general, the matrix partitioning can be computed algebraically using existing packages, such as METIS/ParMETIS \cite{CC:karypis1998fast}, Scotch \cite{CC:chevalier2008pt} and Zoltan \cite{CC:boman2012zoltan}. In practice, when geometric information, e.g., coordinates of the underlying mesh, is available, it can be taken advantage to compute customized matrix partitioning for improved efficiency. For ice sheets modeling, we use the extruded partitioning scheme in Section~\ref{CC:sec:partition}.

\section{Numerical Results} \label{CC:sec:results}

This section demonstrates the efficiency and the (parallel) scalability of our hierarchical solver. In particular, we want to answer the following two questions:
\begin{enumerate}
\item how does the computation costs, including factorization cost, solve cost per iteration and number of iterations, increase as the problem size increases?
\item how does the running time (factorization cost + solve cost per iteration $\times$ iteration count) of our hierarchical solver compare with that of other state-of-the-art methods?
\end{enumerate}~\\ 
\vspace{-0.7cm}
\paragraph{Test problems} We show results for solving linear systems arising from simulating ice sheets on Antarctica. These simulations are carried out on a sequence of increasingly large meshes corresponding to horizontal refinement (fixed number of vertical layers), as is commonly done in practice. The linear systems are solved using the (right) preconditioned GMRES (a restarted GMRES(200) from the Trilinos Belos\footnote{https://trilinos.org/packages/belos/} package) with a stopping tolerance of $10^{-12}$ and a maximum number of iterations of 1,000. 

\paragraph{Parameters in hierarchical solver} In our hierarchical solver, partitions are computed using geometric coordinates of mesh grids by calling the Zoltan~\cite{CC:boman2012zoltan} library, with cluster sizes around 100, which empirically gives good performance. The (only) other parameter $\epsilon$, i.e., errors of low-rank approximations, is varied to show trade-off between the costs of factorization and solve. When $\epsilon$ decreases (more accurate approximations), the factorization cost increases and the number of preconditioned iterations decreases.

\paragraph{Machine} All experiments were run on the NERSC Edison (Cray XC30) supercomputer\footnote{http://www.nersc.gov/users/computational-systems/edison/}, where every compute node has two 12-core Intel ``Ivy Bridge" processors at 2.4 GHz, and nodes are connected with Cray Aries with Dragonfly topology. Our parallel hierarchical solver is implemented using C++ and MPI. The code is compiled with icpc (ICC) 18.0.1 and linked with the Intel MKL library.

\subsection{Improved efficiency} \label{subsec:ice_sheet}

This subsection shows the improved efficiency of the hierarchical solver with the vertical partitioning step and the deferred-compression scheme. The focus is on the number of iterations because if we assume the factorization time and the solve time per iteration of the hierarchical solver are both $\mathcal{O}(N)$, then the total running time only depends on the iteration count. The four test problems used in this subsection are the following.

\begin{table}[htbp]
  \caption{Four test matrices used in Section~\ref{subsec:ice_sheet}}
  \centering
  \begin{threeparttable}
  \begin{tabular}{  c  c  c } \toprule
    $h$ & $N$ & \# of vertical mesh layers \\ \midrule
    64km  & 63,126    & 9 \\ 
    32km  & 245,646   & 9 \\ 
    16km  & 969,642     & 9 \\ 
    8km   & 3,848,868    & 9 \\ \bottomrule
  \end{tabular}
\begin{tablenotes}
\small 
\item $h$: horizontal mesh resolution/spacing
\item $N$: number of unknown variables.
\end{tablenotes}
\end{threeparttable}
\end{table}

\paragraph{Original LoRaSp method}
We first show the poor performance of the original LoRaSp solver, if applied directly to the smallest test matrix corresponding to a resolution of 64km between adjacent mesh points. In the original solver, matrix partitioning is computed algebraically with hypergraph partitioning~\cite{devine2006parallel} based on the sparsity of the discretization matrix, which ignores the numerical values in the matrix and would not capture the underlying the strong/weak coupling. Although more sophisticated partitioning algorithms, which assign matrix entries to edge weights in the adjacency graph, may lead to better partitioning results, it is beyond the scope of this paper to explore such effects.

As Table~\ref{CC:table:raw} shows, the original LoRaSp solver did not converge in 100 iterations when $\epsilon \le 10^{-3}$; when $\epsilon = 10^{-4}$, the solver converged at 69 iterations with a significant computation time (as compared to results in Table \ref{CC:table:partition}). 

\begin{table}[!htb]
\caption{Original LoRaSp solver applied to the linear system corresponding to 64km resolution.}
\label{CC:table:raw}
\centering
\begin{threeparttable}
  \begin{tabular}{  c  c  c  c  c } \toprule
  $\epsilon$ & $10^{-1}$ & $10^{-2}$ & $10^{-3}$ & $10^{-4}$  \\ \midrule
  Factor (s)  & 12  & 31  & 85  &   134  \\ 
  Solve (s)   & ---  & ---  & ---  &   44  \\ 
  Iter \#   & $100^a$  & $100^a$  & $100^a$  &   69  \\ 
  Memory (GB)  & 1  & 3  & 6  &   8  \\ \bottomrule
  \end{tabular}
  \begin{tablenotes}
  \small \item
  $^a$ solver didn't converge in 100 iterations.
  \end{tablenotes}
\end{threeparttable}
\end{table}

\paragraph{Extruded partitioning}
Table~\ref{CC:table:partition} shows the factorization time, the solve time (for all iterations), iteration number and the storage cost of the hierarchical solver using the extruded partitioning scheme. With a pre-processing step of vertical partitioning, the original LoRaSp solver becomes much more efficient for solving problems from ice sheet modeling. For example, comparing the first column in Table~\ref{CC:table:partition} with the last column in Table \ref{CC:table:raw}, we see that the total time is about 1 second and 178 seconds for 64km, respectively. Although the performance of LoRaSp has improved significantly with vertical partitioning, the number of iterations doubles as the mesh is refined as shown in Table~\ref{CC:table:partition}. Suppose the factorization time and the solve time per iteration of the hierarchical solver are both $\mathcal{O}(N)$, the total running time is $\mathcal{O}(N^{3/2})$ as the iteration number grows as $\mathcal{O}(N^{1/2})$.

\begin{table}[H]
  \caption{Extruded partitioning. Fixed $\epsilon=10^{-1}$ in LoRaSp solver.}
  \label{CC:table:partition}
  \centering
  \begin{tabular}{  c  c  c  c  c } \toprule
    Resolution & 64km & 32km & 16km & 8km  \\ \midrule
    Factor (s)  & 0.67  & 2.5  & 10  &  41  \\ 
    Solve (s)   & 0.41  & 3.7  & 33  &  220  \\ 
    Iter \#     & 12    & 26   & 52  &  107  \\ 
    Memory (GB) & 0.4   & 4    & 7   &  27  \\ \bottomrule
  \end{tabular}
\end{table}

Table~\ref{table:partition_steps} shows the number of iterations of different values of $\epsilon$ for increasing problem sizes. As shown in the table, the number of iterations decreases as $\epsilon$ decreases. When $\epsilon \ge 10^{-3}$, the iteration number roughly doubles as meshes are refined. When $\epsilon = 10^{-4}$, the number of iterations increases relatively slowly. In principle, we could further decrease $\epsilon$ and the number of iterations would be further reduced. But the increase of factorization cost with a smaller $\epsilon$ may lead to a higher total running time.

\begin{table}[htbp]
  \caption{Extruded partitioning. Number of iterations for different values of $\epsilon$.} \label{table:partition_steps}
  \centering
  \begin{tabular}{  c  c  c  c  c } \toprule
    $\epsilon$ & $10^{-1}$ & $10^{-2}$ & $10^{-3}$ & $10^{-4}$  \\ \midrule
    64km & 12	& 12	& 11	& 11 \\
	32km &	26	& 22	& 21	& 17 \\
	16km &	52	& 44	& 37	& 28 \\
	8km	 &	107 & 83	& 71	& 35 \\ \bottomrule
\end{tabular}
\end{table}

\paragraph{Deferred compression} Table~\ref{table:ds} shows the number of iterations of different values of $\epsilon$ when the deferred-compression scheme is used. As the table shows, the number of iterations is reduced significantly. More importantly, the iteration count is almost constant when $\epsilon \le 10^{-3}$ and increases logarithmically when $\epsilon = 10^{-2}$. Suppose the factorization time and the solve time per iteration are both $\mathcal{O}(N)$, the total running time would be $\mathcal{O}(N)$ or $\mathcal{O}(N\log(N))$ when $\epsilon \le 10^{-3}$ or $\epsilon = 10^{-2}$, respectively.

\begin{table}[htbp]
  \caption{Deferred-compression scheme. Number of iterations for different values of $\epsilon$.} \label{table:ds}
  \centering
  \begin{tabular}{  c  c  c  c  c } \toprule
    $\epsilon$ & $10^{-1}$ & $10^{-2}$ & $10^{-3}$ & $10^{-4}$  \\ \midrule
    64km &  14  & 10    & 9	& 5 \\
	32km &	21	& 12	& 8	& 5 \\
	16km &	37  & 14	& 8	& 5 \\
	8km	 &	54	& 16	& 8	& 6 \\ \bottomrule
\end{tabular}
\end{table}

\subsection{Ice sheet problems} \label{subsec:ilu}

In this subsection, we show running time of our hierarchical solver for solving practically large-scale linear systems from ice sheet modeling. 
Based on previous results, we chose $\epsilon = 10^{-2}$ for our hierarchical solver, which incorporates the deferred-compression scheme and extruded partitioning. 

Our reference method is the ILU-preconditioned domain decomposition method used in the Albany package~\cite{CC:tezaur2015albany} developed at the Sandia National Laboratories for ice sheet modeling. The ILU module is a well-tuned high-performance implementation in Trilinos IFPACK\footnote{https://trilinos.org/packages/ifpack/}. Since the factorization time of ILU is a tiny fraction in the total runtime, it is not shown explicitly in the following figures and tables.

In the following numerical experiments, we will fix the number of vertical mesh layers at either 6 mesh layers or 11 mesh layers, which are two common ice sheet modeling choices for low- and high-accuracy. Correspondingly, the numbers of unknowns on the same vertical line are 12 and 22 (as there are two unknowns associated with every grid point).

\paragraph{6 vertical mesh layers} Fig.~\ref{fig:ilu} shows the total running time of a weak scaling experiment\footnote{the problem size increases proportionally to the number of processors used. In other words, the problem size per processor is fixed.}, where a sequence of problems are solved on 1, 4, 15, 64 and 256 processors. As Fig.~\ref{fig:ilu} (left) shows, the running time of ILU blows up as the problem size increases, while that of the hierarchical solver remains almost constant. Fig.~\ref{fig:ilu} (right) shows the decay of residuals, and the convergence of ILU deteriorates significantly as the problem size increases.

\begin{figure}[htbp]
\begin{center}
\scalebox{0.33}{\includegraphics[natwidth=610,natheight=642]{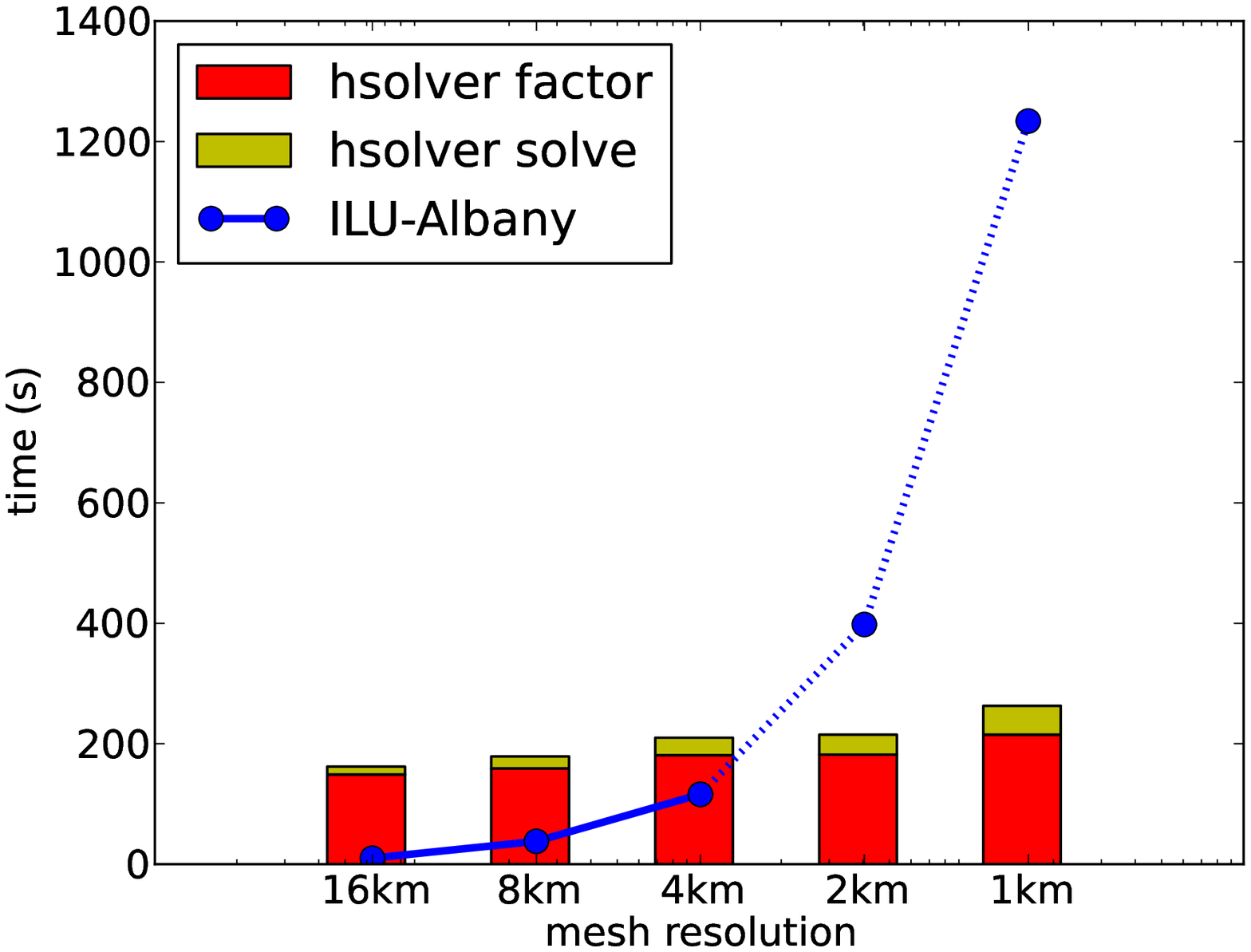}} 
\scalebox{0.33}{\includegraphics[natwidth=610,natheight=642]{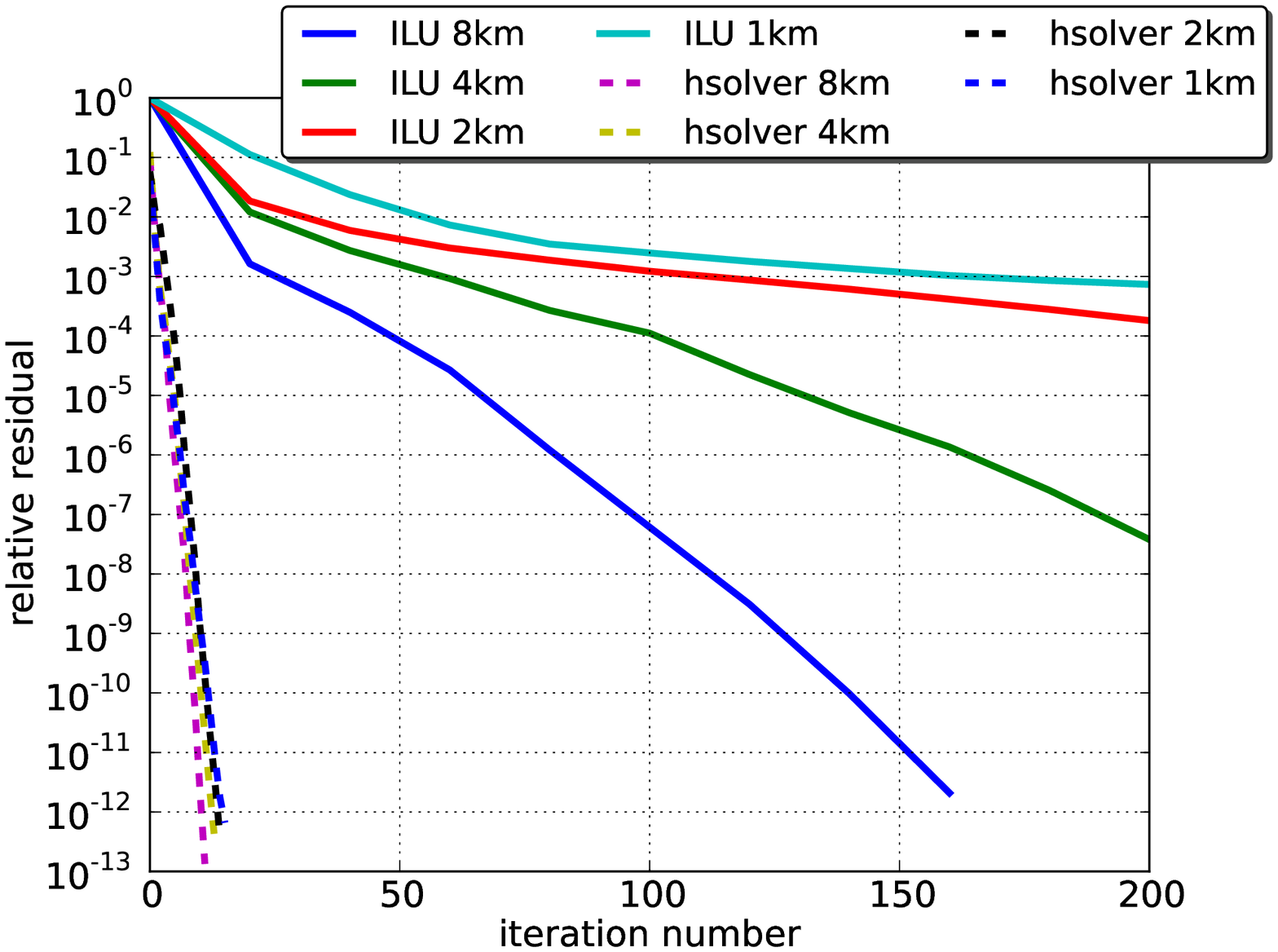}} 
\caption{6 vertical mesh layers. Weak scaling experiment on 1, 4, 16, 64, 256 processors. (Left) comparison of the total runtime (factorization+solve) between ILU (factorization time is negligible and not shown explicitly) and our hierarchical solver (hsolver). Dashed line means extrapolation based on existing data because ILU didn't converge to $10^{-12}$. (Right) Decay of residuals in ILU and our hierarchical solver (hsolver).}
\label{fig:ilu}
\end{center}
\end{figure}

\begin{table}[H]
  \caption{6 vertical mesh layers: hierarchical solver ($\epsilon=10^{-2}$) vs. ILU.} \label{table:ilu}
  \centering
  \begin{threeparttable}
  \begin{tabular}{  c  c  c | c  c | c  c  c} \toprule
     & &  & \multicolumn{2}{|c}{ILU} & \multicolumn{3}{|c}{hierarchical solver}  \\ 
    $h$ & $N$ & $P$ & iter \# & total time & iter \# & factor & solve \\ \midrule
    16km &  629K & 1    & 64    & 10	& 10 & 149 & 13 \\
	8km &	2.5M & 4	& 170	& 38	& 12 & 159 & 20 \\
	4km &	10M  & 16   & 498	& 116	& 14 & 181 & 29 \\
	2km	&	40M  & 64	& $1000^a$	& --- & 14 & 182 & 33 \\
    1km	&	161M & 256	& $1000^b$  & --- & 15 & 215 & 48 \\\bottomrule
\end{tabular}
\begin{tablenotes}
\small 
\item $h$: horizontal mesh resolution/spacing, $N$: number of unknown variables, $P$: number of processors.
\item
$^a$ ILU didn't converge to $10^{-12}$; it took 398 seconds for 1000 iterations (residual $\approx 10^{-10}$).
\item
$^b$ ILU didn't converge to $10^{-12}$; it took 346 seconds for 1000 iterations (residual $\approx 10^{-6}$).
\end{tablenotes}
\end{threeparttable}
\end{table}

Detailed information about this weak scaling experiment is summarized in Table~\ref{table:ilu}. As the mesh is refined every time, the number of iterations for ILU doubles, whereas it increases by only one or two steps for our hierarchical solver. As a result, we conclude that the computation cost of ILU is $\mathcal{O}(N^{3/2})$ as the iteration count increases as $\mathcal{O}(N^{1/2})$ empirically. By contrast, our hierarchical solver achieved $\mathcal{O}(N\log(N))$ computational complexity.

\begin{figure}[t]
\begin{center}
\scalebox{0.31}{\includegraphics[natwidth=610,natheight=642]{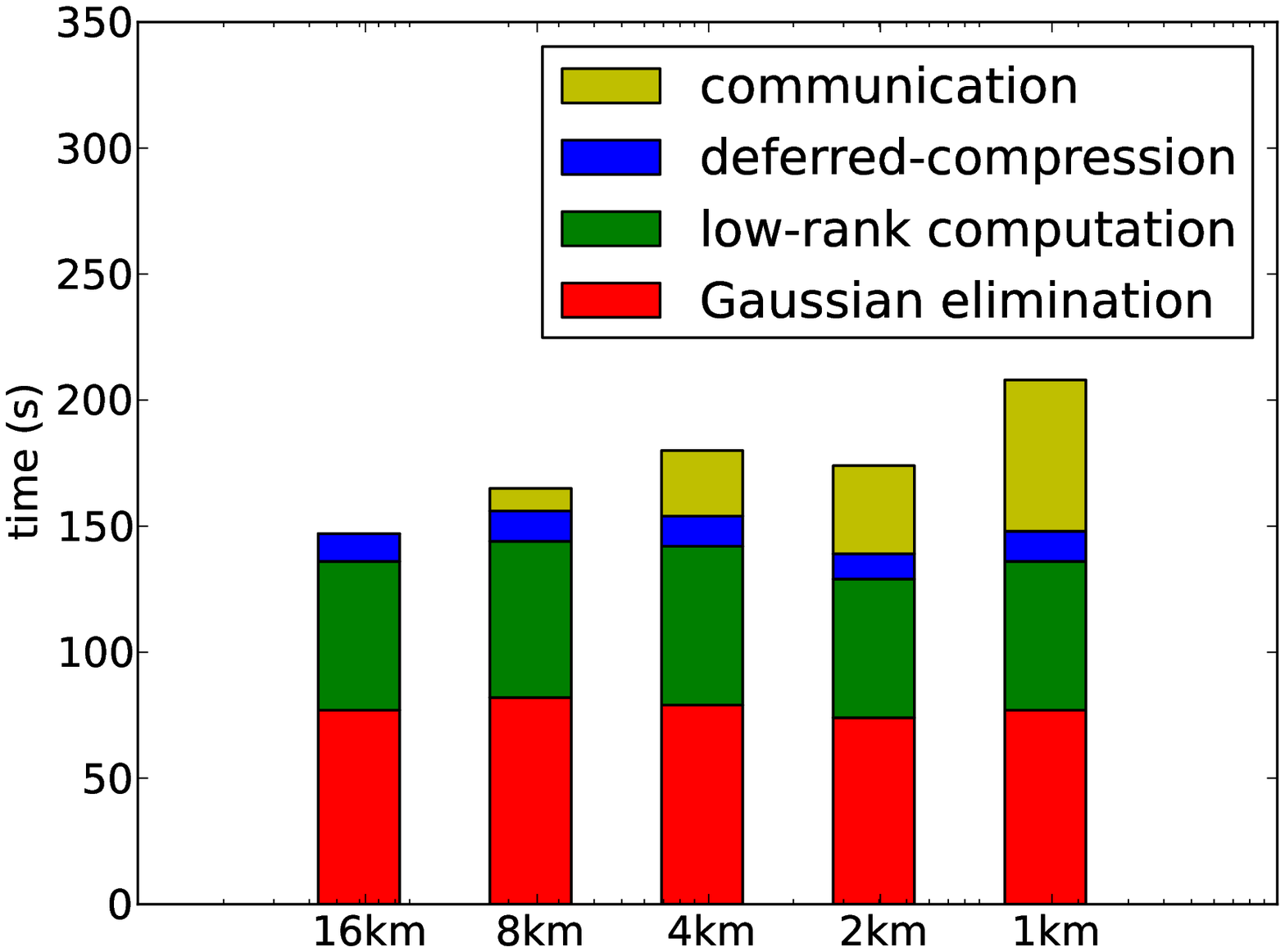}} 
\scalebox{0.31}{\includegraphics[natwidth=610,natheight=642]{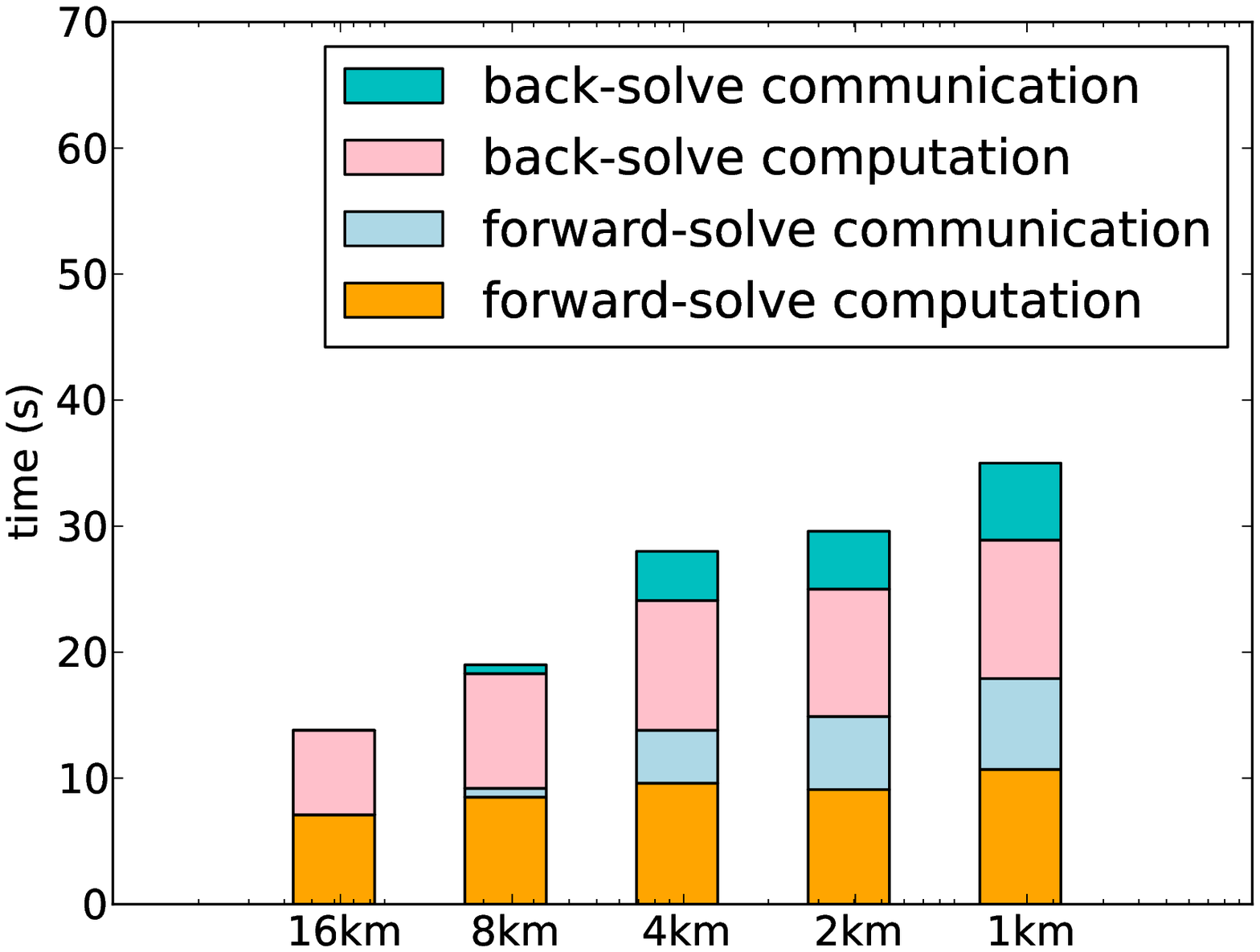}} 
\scalebox{0.37}{\includegraphics[natwidth=610,natheight=642]{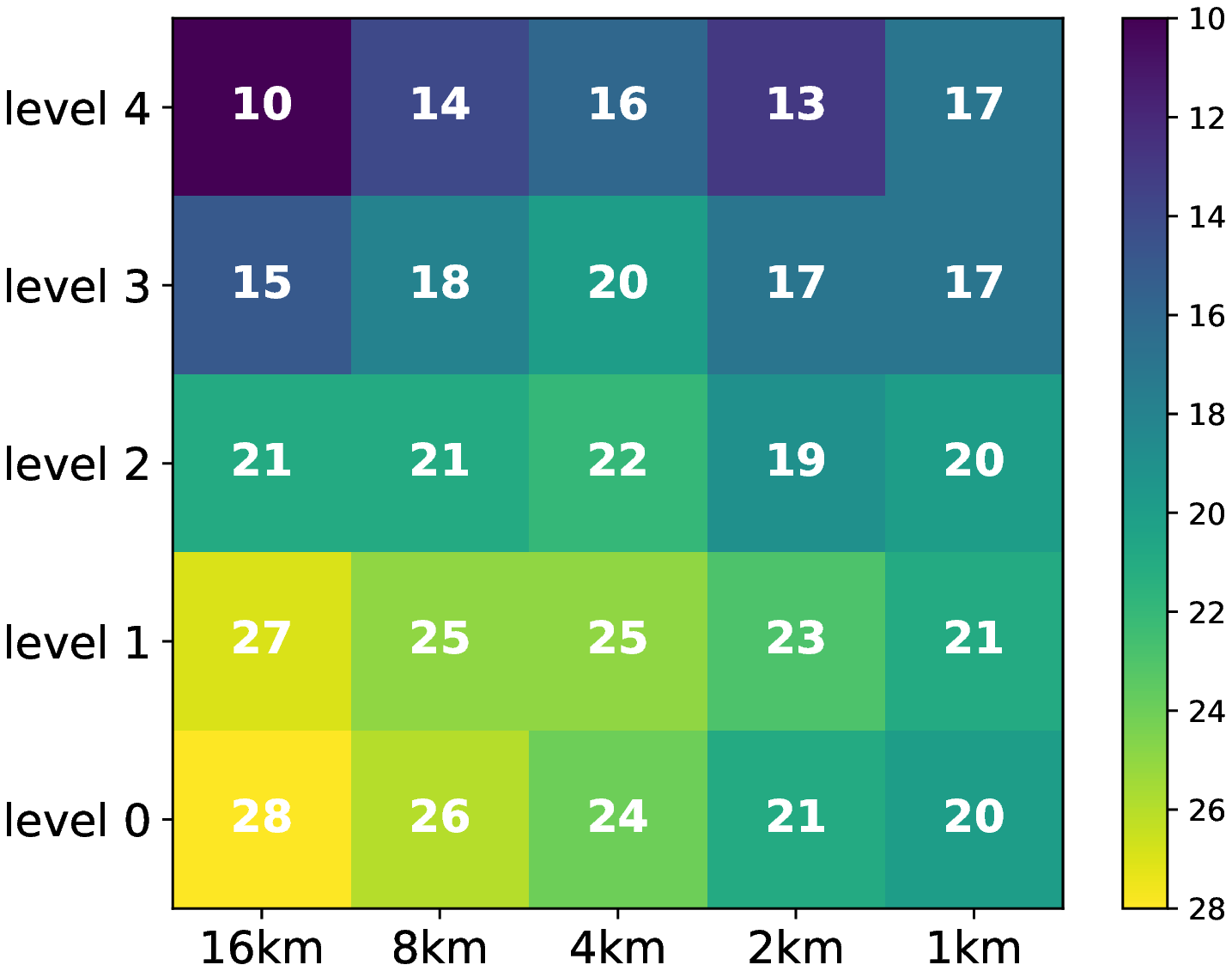}} 
\caption{6 vertical mesh layers. Weak scaling experiment on 1, 4, 16, 64, 256 processors. (First row) breakdown of the factorization time and the solve time (forward substitution+backward substitution for all iterations) in Table~\ref{table:ilu}. (Second row) Average sizes of low-rank compression at different levels (``level 0'' is the finest level) in the hierarchical solver ($\epsilon=10^{-2}$).}
\label{fig:breakdown}
\end{center}
\end{figure}

Fig.~\ref{fig:breakdown} (first row) shows the breakdown of the factorization time and the solve time (for all iterations) on one processor in the parallel hierarchical solver. In our weak scaling experiment, the deferred-compression time, low-rank compression time, Gaussian elimination time and solve time all stay almost constant as the problem size increases (proportionally to the number of processors used). Moreover, the cost of the deferred-compression scheme is only a small fraction of the total factorization cost.

Fig.~\ref{fig:breakdown} (second row) shows the average sizes of low-rank compression at all levels and problem sizes are well-bounded and hence the total running time of the hierarchical solver scales closely to $\mathcal{O}(N)$ as Thm.~\ref{thm:linear} and Thm.~\ref{thm:parallel} state.

\paragraph{11 vertical mesh layers} A weak scaling study for solving a sequence of increasingly large linear systems on 4, 16, 64, 256 and 1024 processors are shown in Table~\ref{table:10layers}. Again, the number of iterations of ILU increases as $\mathcal{O}(N^{1/2})$ while that of the hierarchical solver increases very slowly. As a result, the computation cost of ILU behaves as $\mathcal{O}(N^{3/2})$, whereas our hierarchical solver scales as $\mathcal{O}(N\log(N))$.

\begin{table}[htbp]
  \caption{11 vertical mesh layers: hierarchical solver ($\epsilon=10^{-2}$) vs. ILU.} \label{table:10layers}
  \centering
  \begin{threeparttable}
  \begin{tabular}{  c  c  c | c  c | c  c  c} \toprule
     &  &  & \multicolumn{2}{|c}{ILU} & \multicolumn{3}{|c}{hierarchical solver}  \\ 
    $h$ & $N$ & $P$ & iter \# & total time & iter \# & factor & solve \\ \midrule
    16km &  1.1M	& 4	    & 90    & 7	    & 18 & 147 & 22 \\
	8km &	4.6M	& 16	& 183	& 21	& 23 & 186 & 38 \\
	4km &	18.5M	& 64 	& 468	& 66	& 24 & 213 & 53 \\
	2km	&	74M		& 256	& $1000^a$	& ---	& 27 & 214 & 65 \\
    1km	&	296M	& 1024	& $1000^b$  & ---	&  27 & 243 & 71 \\\bottomrule
\end{tabular}
\begin{tablenotes}
\small 
\item $h$: horizontal mesh resolution/spacing, $N$: number of unknown variables, $P$: number of processors. 
\item $^a$ ILU didn't converge to $10^{-12}$; it took 145 seconds for 1000 iterations (residual $\approx 10^{-9}$). 
\item $^b$ ILU didn't converge to $10^{-12}$; it took 83 seconds for 1000 iterations (residual $\approx 10^{-3}$).
\end{tablenotes}
\end{threeparttable}
\end{table}

\begin{figure}[htbp]
\begin{center}
\scalebox{0.31}{\includegraphics[natwidth=610,natheight=642]{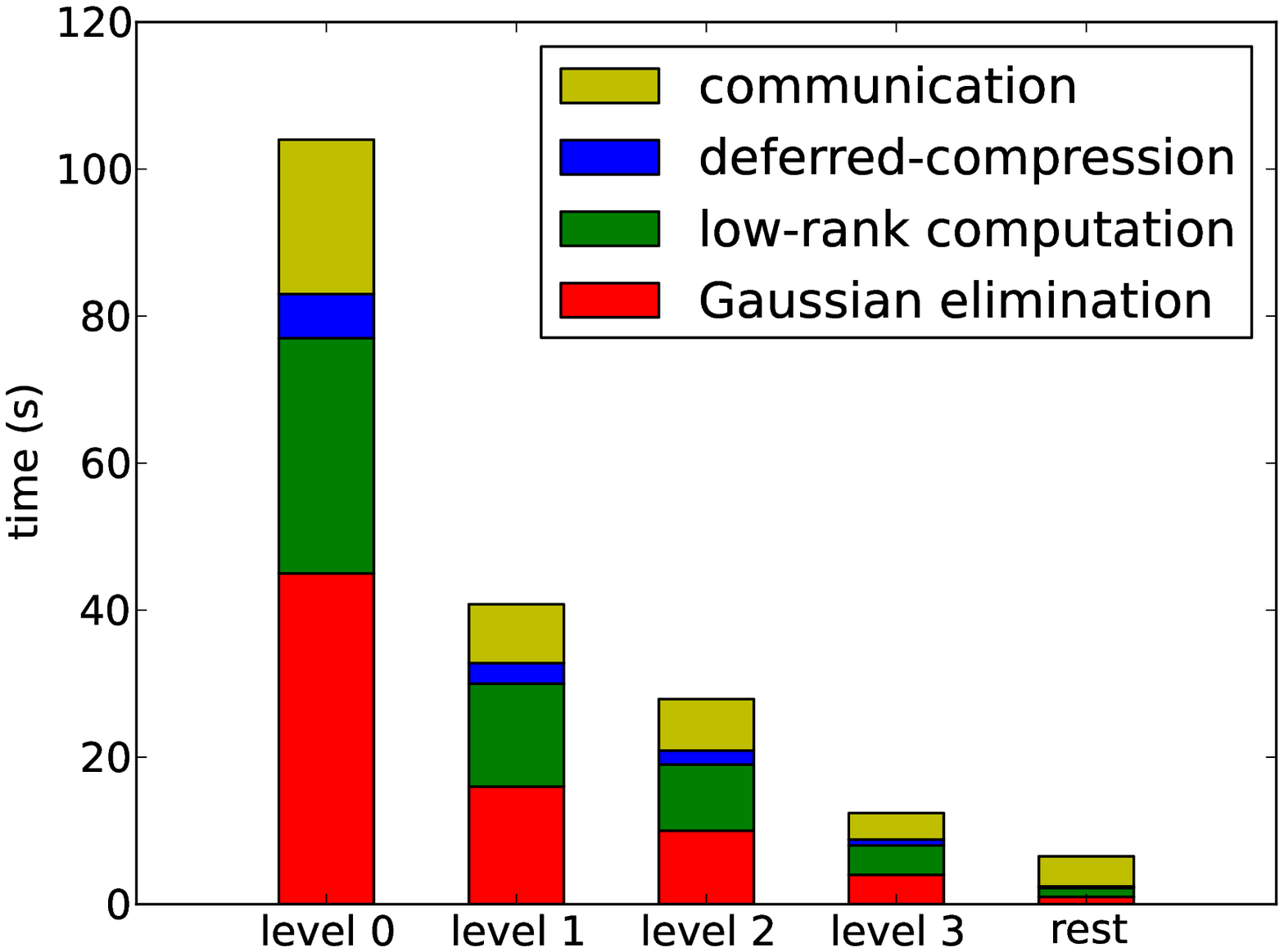}} 
\scalebox{0.31}{\includegraphics[natwidth=610,natheight=642]{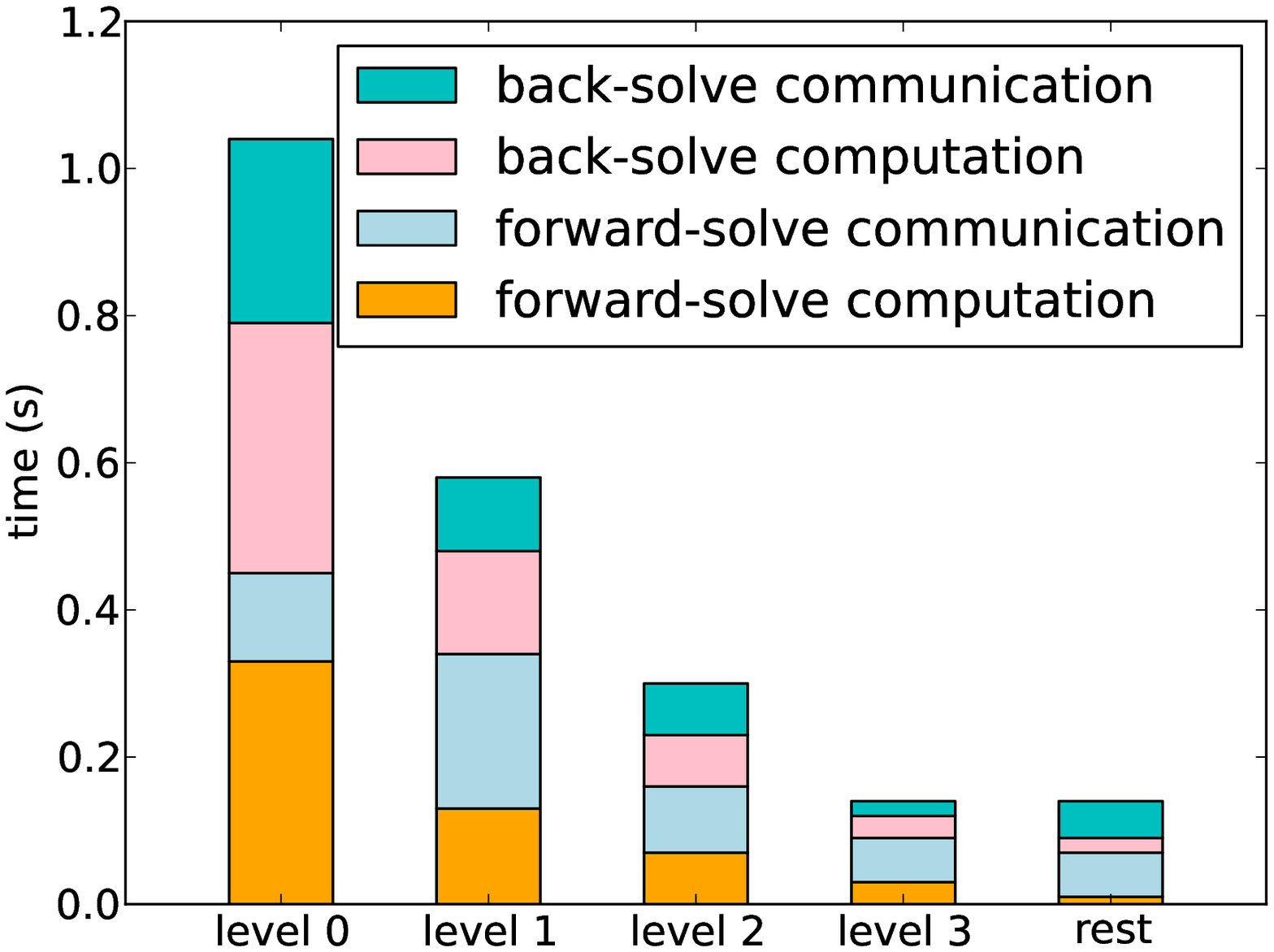}} 
\caption{11 vertical mesh layers. Breakdown of the factorization time and the solve time (forward substitution+backward substitution for one iteration) for the 4km resolution (on 64 processors). Note ``level 0" is the finest level in the hierarchical solver ($\epsilon=10^{-2}$).
%\todo[inline]{Plot also 1km}
\label{fig:level_time}}
\end{center}
\end{figure}

Fig.~\ref{fig:level_time} shows the breakdown of the factorization time and solve time (forward-substitution+backward-substitution) per iteration for different levels. As the figure shows, both the factorization time and the solve time decreases proportionally at coarser levels. The reason for this is that the number of partitions at the coarse level is halved while the size of every partition (twice the compression rank) remains bounded. This type of behavior is commonly observed in the profile of multi-level methods (e.g., the multigrid method and the fast multipole method) and is crucial for achieving parallel scalability.

%\subsection{General matrices} \label{subsec:ufl}

%This subsection shows results for solving general matrices from the SuiteSparse Matrix Collection to demonstrate the versatility of the hierarchical solver.

\begin{comment}

% Poisson

\begin{table}[htbp]
  \caption{2D Poisson. Number of iterations: REF, DS and VP with iterative refinement (tolerance $10^{-12}$). $\epsilon=0.1$. }
  \centering
  \begin{tabular}{  c  c  c  c } \toprule
    N  & REF & DS & VP   \\ \midrule
    1k & 6	& 9	& 5 \\
	4k & 20	& 15 & 10 \\
	16k	& 67 & 30 & 26 \\
    64k & 163 & 73 & 35 \\ \bottomrule
\end{tabular}
\end{table}

% DS ILUT on same memory footprint

S3DKT3M2
S3DKQ4M2
cfd1
thermal2
cant
\end{comment}

\section{Conclusions and future work}

In this paper, we have introduced the deferred-compression technique for developing robust hierarchical solvers based on strongly admissible hierarchical matrices. For these matrices, off-diagonal matrix blocks that satisfy the strong admissibility condition are numerically low-rank (a.k.a., data-sparse). This low-rank property is leveraged in fast algorithms for computing approximate Cholesky factorizations of an SPD matrix, where (block) Gaussian elimination is applied after low-rank blocks are compressed. In the deferred-compression scheme, by contrast, these matrix blocks are first scaled by the inverse of the Cholesky factor of the corresponding diagonal block before low-rank approximations are applied. This deferred compression provably reduces the error in forming the subsequent Schur complement, especially for ill-conditioned linear systems. Our analysis shows that the $ww$ block in the Schur complement becomes second-order accurate ($\epsilon^2$) with respect to the truncation error $\epsilon$, as opposed to first-order accurate ($\epsilon/\sigma_{\min}(A_{ss})$) in the original algorithm, and more importantly, the block is shown to be SPD.

The effectiveness of the deferred-compression scheme is demonstrated through the newly developed improved LoRaSp solver, which is based on the original LoRaSp method and deploys the new compression technique. The improved LoRaSp solver has linear computational complexity under some mild assumptions, and its parallelization is similar to the original LoRaSp solver. Similar to ILU, the improved LoRaSp solver computes an approximate factorization by compressing fill-in blocks, but its dropping/truncation rule is based on the decay of singular values, which is expected to be more efficient than other level-based or threshold-based rules typically used in ILU. With a general graph partitioner, the improved LoRaSp solver can be used as a ``black-box'' method to solve general SPD sparse linear systems.

The application of ice sheet modeling is studied to benchmark the improved LoRaSp solver against other state-of-the-art methods. The standard smoothed aggregation AMG solver struggles due to difficulties associated with the strong anisotropic phenomena. On the other hand, ILU, a commonly used method in practical ice sheet simulations, has the disadvantage that the number of iterations doubles as the discretization mesh is refined, making it an $\mathcal{O}(N^{3/2})$ method. Compared with existing methods, our improved LoRaSp solver delivers a $\mathcal{O}(N)$ solution for a wide range of meshes. For extruded meshes used in ice sheet modeling, we have developed the extruded partitioning scheme to boost the performance of our solver, and we expect this approach to be effective for other geophysical modeling of thin structures.

Several directions for future research are as follows.
\begin{itemize}
\item The deferred-compression technique does not guarantee the subsequent Schur complement to be SPD. The creation of a numerical algorithm that guarantees the SPD property with strong admissibility is currently open.
\item The deferred-compression scheme and the improved LoRaSp solver were developed for SPD matrices; their extensions to non-symmetric matrices should be explored. For non-symmetric matrices, the optimal scaling factors for the upper triangular and the lower triangular parts need to be determined.
\item If the near-null space (very small singular values) of a physical model is available, it can be taken advantage of by hierarchical solvers to accelerate convergence. Such a scheme~\cite{CC:yang2016sparse} respects extra constraints on low-rank approximations and would resolve any vector in the near-null space exactly or very accurately.
\end{itemize}

\section{Acknowledments}
We thank Mauro Perego for help with the ice sheet test problems. This work was partly funded by the U.S.\ Department of Energy through the Predictive Science Academic Alliance Program (PSAAP II) under Award Number DE-NA0002373-1 and partly funded by an LDRD research grant from Sandia National Laboratories. Sandia National Laboratories is a multimission laboratory managed and operated by National Technology and Engineering Solutions of Sandia, LLC, a wholly owned subsidiary of Honeywell International, Inc., for the U.S.\ Department of Energy's National Nuclear Security Administration under contract DE-NA-0003525.

%% The Appendices part is started with the command \appendix;
%% appendix sections are then done as normal sections
%% \appendix

%% \section{}
%% \label{}

%% References
%%
%% Following citation commands can be used in the body text:
%% Usage of \cite is as follows:
%%   \cite{key}          ==>>  [#]
%%   \cite[chap. 2]{key} ==>>  [#, chap. 2]
%%   \citet{key}         ==>>  Author [#]

%% References with bibTeX database:

\bibliographystyle{model1-num-names}
\bibliography{ice_sheet.bib}

%% Authors are advised to submit their bibtex database files. They are
%% requested to list a bibtex style file in the manuscript if they do
%% not want to use model1-num-names.bst.

%% References without bibTeX database:

% \begin{thebibliography}{00}

%% \bibitem must have the following form:
%%   \bibitem{key}...
%%

% \bibitem{}

% \end{thebibliography}

\end{document}